\documentclass[10pt]{amsart}
\usepackage[normalem]{ulem}
\usepackage[usenames]{color}
\usepackage{amsrefs}
\usepackage{amssymb}
\usepackage[usenames,dvipsnames]{pstricks}
\usepackage{pstricks-add}

 \newcommand{\R}{\mathbb R}
 \newcommand{\Z}{\mathbb Z}
\newcommand{\ep}{\varepsilon}

\DeclareMathOperator{\Int}{Int}

\newcommand{\ds}{\displaystyle}

\theoremstyle{plain} \newtheorem{thm}{Theorem}
\newtheorem{cor}[thm]{Corollary} \newtheorem{prop}[thm]{Proposition}
\newtheorem{lemma}[thm]{Lemma}

\theoremstyle{definition} \newtheorem{defn}[thm]{Definition}

\newtheorem{ex}[thm]{Example} 

\theoremstyle{remark} \newtheorem{remark}[thm]{Remark}

\DeclareMathOperator{\gr}{GR}
\DeclareMathOperator{\crec}{CR}
\newcommand{\scrwo}{\operatorname{SCR}}
\newcommand{\screc}{\scrwo_d}
\DeclareMathOperator{\mane}{M}
\newcommand{\prodset}[1]{{\mathcal{#1}}}
\DeclareMathOperator{\id}{Id}
\newcommand{\diag}{\Delta_X}
\DeclareMathOperator{\Fix}{Fix}
\newcommand{\nw}{\prodset{NW}}
\newcommand{\nseq}{\Sigma}

\newcommand{\tube}{\prodset{V}}
\newcommand{\opentube}{\prodset{V}^\circ}
\newcommand{\dgr}{d^\ast}
\newcommand{\mrel }{\prodset{M}}
\newcommand{\wrel }{\prodset{W}}
\newcommand{\arel }{\prodset{A}}

\numberwithin{thm}{section}

\begin{document}

\title{The Generalized Recurrent Set and Strong Chain Recurrence}

  \author{Jim Wiseman}   \address{Agnes Scott College \\ Decatur, GA 30030} \email{jwiseman@agnesscott.edu}

\thanks{This work was supported by a grant from the Simons Foundation (282398, JW)}


\begin{abstract}
Fathi and Pageault have recently shown a connection between Auslander's generalized recurrent set $\gr(f)$ and Easton's strong chain recurrent set.  We study $\gr(f)$ by examining that connection in more detail, as well as connections with other notions of recurrence.   We give equivalent definitions that do not refer to a metric.  In particular, we show that $\gr(f^k)=\gr(f)$ for any $k>0$, and give a characterization of maps for which the generalized recurrent set is different from the ordinary chain recurrent set.
\end{abstract}

\maketitle

\section{Introduction}

Auslander's generalized recurrent set $\gr(f)$ (defined originally for flows (see \cite{Auslander}), and extended to maps  (see \cites{A,AA})) is an important object of study in dynamical systems.  (See, for example, \cites{Nit,Nit2,Peix1,Peix2,ST,ST2,AusNC,KK,Garay}.)  Fathi and Pageault have recently shown (\cite{FP}) that $\gr(f)$ can be defined in terms of Easton's strong chain recurrent set (\cite{E}) (although they did not use the strong chain recurrent terminology).  (See  \cites{ABC,Y} for more on the literature on the strong chain recurrent set.)  In this paper we study the generalized recurrent set by
examining that connection in more detail, as well as connections with other notions of recurrence.    In particular, we show that $\gr(f^k)=\gr(f)$ for any $k>0$, and give a characterization of maps for which the generalized recurrent set is different from the ordinary chain recurrent set.

The strong chain recurrent set depends on the choice of metric, and thus Fathi and Pageault's description of $\gr(f)$ involves metrics.  Since the generalized recurrent set itself is a topological invariant, it is useful to be able to describe it in terms of strong chain recurrence without referring to a metric  (especially in the noncompact case, as in \cites{AA}).  We give definitions with topological versions of strong $\ep$-chains that do not involve a metric.

The paper is organized as follows.  We give definitions and examples in Section~\ref{sect:defns}, and discuss Fathi and Pageault's Ma\~n\'e set in Section~\ref{sect:mane}.  In Section~\ref{sect:GR} we turn to the generalized recurrent set, giving a topological definition and showing, in particular,  that there exists a metric for which the strong chain recurrent set equals $\gr(f)$.  In Section~\ref{sect:powers} we show that $\gr(f^k)=\gr(f)$ for any $k>0$.  Finally, in Section~\ref{sect:relation} we consider the relationship between the generalized recurrent set and the ordinary chain recurrent set.

Thanks to Todd Fisher and David Richeson for useful conversations on these topics, and to the anonymous referee for very prompt and helpful  comments and perspective. Among other things, the referee provided a greatly improved proof of Theorem~\ref{thm:mwequal}.

\section{Definitions and examples} \label{sect:defns}

Throughout this paper, let  $(X,d)$ be a compact metric space and $f:X\to X$  a continuous map.  Recurrence on noncompact spaces is more complicated and will be the subject of future work.

\begin{defn} 
An {\em $(\ep,f,d)$-chain} (or {\em $(\ep,d)$-chain}, if it is clear what the map is, or \emph{$\ep$-chain}, if the metric is also clear) of length $n$  from $x$ to $y$ is a sequence $(x=x_0, x_1, \dots, x_n=y)$ such that $d(f(x_{i-1}),x_i)\le\ep$ for $i=1,\dots,n$.  A point $x$ is {\em chain recurrent} if for every $\ep>0$, there is an $\ep$-chain from $x$ to itself.  We denote the set of chain recurrent points by $\crec(f)$.  
Two points $x$ and $y$ in $\crec(f)$ are \emph{chain equivalent} if there are $\ep$-chains from $x$ to $y$ and from $y$ to $x$ for any $\ep>0$.
The map $f$ is  \emph{chain transitive} on a subset $N$ of $X$ if for every $x,y\in N$ and every $\ep>0$, there is an $\ep$-chain from $x$ to $y$; the chain equivalence classes are called the \emph{chain transitive components}.
\end{defn}

\begin{remark}
Chain recurrence depends only on the topology, not on the choice of metric (see, for example, \cite{Franks}).
\end{remark}

The following definitions are due to Easton~\cite{E}.
\begin{defn} 
A {\em strong $(\ep,f,d)$-chain} (or {\em strong $(\ep,d)$-chain} or \emph{strong $\ep$-chain})  from $x$ to $y$ is a sequence $(x=x_0, x_1, \dots, x_n=y)$ such that $\sum_{i=1}^n d(f(x_{i-1}),x_i)\le\ep$.  A point $x$ is {\em $d$-strong chain recurrent} (or \emph{strong chain recurrent}) if for every $\ep>0$, there is a strong $(\ep,d)$-chain from $x$ to itself.  We denote the set of strong chain recurrent points by $\screc(f)$.  
Two points $x$ and $y$ in $\screc(f)$ are \emph{$d$-strong chain equivalent} (or \emph{strong chain equivalent}) if there are strong $(\ep,d)$-chains from $x$ to $y$ and from $y$ to $x$ for any $\ep>0$.
A subset $N$ of $X$ is \emph{$d$-strong chain transitive} (or \emph{strong chain transitive}) if  every $x$ and $y$ in $N$ are $d$-strong chain equivalent;  the strong chain equivalence classes are called the \emph{strong chain transitive components}.
\end{defn}

\begin{ex} \label{ex:halfcirc}
Let $X_1$ be the circle with the usual topology, and let $f_1:X_1\to X_1$ be a homeomorphism that fixes every point on the left semicircle $C_1$ and moves points on the right semicircle clockwise (see Figure~\ref{fig:halfcirc}).  Then for any choice of metric $d$, we have $\screc(f_1)=C_1$, and each point in $C_1$ is a strong chain transitive component.
\end{ex}

\begin{figure}
	\begin{center}
\psscalebox{1.0 1.0} 
{
\begin{pspicture}(0,-1.66)(3.28,1.66)
\psarc[arrowsize=3pt 2]{<-}(1.66,0.0){1.6}{-2}{90.0}
\psarc(1.66,0.0){1.6}{270.0}{370.0}
\psarc[linecolor=black, linewidth=0.06, dimen=outer](1.66,0.0){1.6}{90.0}{270.0}
\end{pspicture}
}
\end{center}
\caption{$f_1:X_1\to X_1$}
\label{fig:halfcirc}
	\end{figure}

\begin{remark}
In general, strong chain recurrence does depend on the choice of metric.  See Example 3.1 in \cite{Y}, or the following example from \cite{FP}.
\end{remark}

\begin{ex}[\cite{FP}] \label{ex:cantor}
Consider the circle with the usual topology, and a map that fixes a Cantor set   and moves all other points clockwise (see Figure~\ref{fig:cantor}).  Choose a metric $d_2$ for which the Cantor set has Lebesgue measure 0; call the resulting metric space $X_2$,  the map $f_2$, and the Cantor set $K_2$.  Then $\scrwo_{d_2}(f_2) = X_2$.  Or we can choose a metric $d_3$ for which the Cantor set has positive Lebesgue measure, and call the resulting metric space $X_3$, with map $f_3$ and  Cantor set $K_3$.  Then $\scrwo_{d_3}(f_3) = K_3$.
\end{ex}

\begin{figure}
\psscalebox{1.0 1.0} 
{
\begin{pspicture}(0,-1.66)(3.28,1.66)
\psarc(1.66,0.0){1.6}{0.0}{360.0}
\psarc[linecolor=black, linewidth=0.08, dimen=outer](1.66,0.0){1.6}{30.0}{36.67}
\psarc[linecolor=black, linewidth=0.08, dimen=outer](1.66,0.0){1.6}{43.33}{50.0}
\psarc[linecolor=black, linewidth=0.08, dimen=outer](1.66,0.0){1.6}{70.0}{76.67}
\psarc[linecolor=black, linewidth=0.08, dimen=outer](1.66,0.0){1.6}{83.33}{90.0}
\psarc[linecolor=black, linewidth=0.08, dimen=outer](1.66,0.0){1.6}{-36.67}{-30.0}
\psarc[linecolor=black, linewidth=0.08, dimen=outer](1.66,0.0){1.6}{-50.0}{-43.33}
\psarc[linecolor=black, linewidth=0.08, dimen=outer](1.66,0.0){1.6}{-76.67}{-70.0}
\psarc[linecolor=black, linewidth=0.08, dimen=outer](1.66,0.0){1.6}{-90.0}{-83.33}
\psarc[arrowsize=3pt 2]{<-}(1.66,0.0){1.6}{-2}{-2.1}
\psarc[arrowsize=3pt 2]{<-}(1.66,0.0){1.6}{55}{54.9}
\psarc[arrowsize=3pt 2]{<-}(1.66,0.0){1.6}{-65}{-65.1}
\psarc[arrowsize=3pt 2]{<-}(1.66,0.0){1.6}{-182}{-182.1}

\end{pspicture}
}
\caption{$f_2:X_2\to X_2$ and $f_3:X_3\to X_3$}
\label{fig:cantor}
\end{figure}

\begin{remark}
Fathi and Pageault~\cite{FP} define a function $L_d:X\times X\to [0,\infty]$, which they call the {\em $d$-Mather barrier}, by $L_d(x,y)= \inf \sum_{i=1}^n d(f(x_{i-1}),x_i)$, where the infimum is over all sequences $(x=x_0, x_1, \dots, x_n=y)$ from $x$ to $y$. (Zheng used a similar function in \cite{Zheng1}.) They then define the {\em $d$-Aubry set} to be $\{x\in X : L_d(x,x)=0\}$.  Thus their $d$-Aubry set is identical to Easton's strong chain recurrent set.  Similarly, they define an equivalence relation on the $d$-Aubry set by setting $x$ and $y$ equivalent if $L_d(x,y) = L_d(y,x)=0$, and call the equivalence classes $d$-Mather classes.  Thus the $d$-Mather classes are exactly the $d$-strong chain transitive components.
\end{remark}

To eliminate the dependence on the metric in $\screc$, we can take either the intersection or the union over all metrics, giving us two different sets.

\begin{defn}[\cite{FP}]
The \emph{Ma\~n\'e set $\mane(f)$} is $\bigcup_{d'} \scrwo_{d'}(f)$ and the \emph{generalized recurrent set $\gr(f)$} is $\bigcap_{d'} \scrwo_{d'}(f)$, where the union and the intersection are both over all metrics $d'$ compatible with the topology of $X$.  (Fathi and Pageault show (\cite{FP}) that this definition of the generalized recurrent set is equivalent to the usual definitions; see Section~\ref{sect:GR}.)
\end{defn}

Thus we have $\gr(f) \subset \screc(f) \subset \mane(f) \subset \crec(f)$; all of the inclusions can be strict, as the following example shows.

\begin{ex}
Let $X$ be the disjoint union of the spaces $X_1$, $X_2$, and $X_3$ from Examples~\ref{ex:halfcirc} and \ref{ex:cantor}, with the induced metric $d$.  Define the map $f:X\to X$ by $f(x)=f_i(x)$ for $x\in X_i$.  Then we have $\gr(f) =C_1 \cup K_2 \cup K_3$, $\screc(f) = C_1 \cup X_2 \cup K_3$, $\mane(f) = C_1 \cup X_2 \cup X_3$, and $\crec(f) = X_1 \cup X_2 \cup X_3$.
\end{ex}

\section{The Ma\~n\'e set $\mane(f)$} \label{sect:mane}

 We  give an equivalent definition of the Ma\~n\'e set $\mane(f)$ based on strong $\ep$-chains, but using a topological definition of chains that does not depend on the metric (Corollary~\ref{cor:newmane}).
  We begin with some notation.  Let $X\times X$ be the product space, and let $\diag$ be the diagonal, $\diag=\{(x,x):x \in X\}$.  To avoid confusion, we will use calligraphic letters like $\prodset{N}$ for other subsets of $X\times X$, and reserve italic letters like $N$ for subsets of $X$.  
  
  Let $B_d(x;\ep)$ (or $B(x;\ep)$ if the metric is clear) be the closed $\ep$-ball around $x$, $B_d(x;\ep) =\{y\in X : d(x,y)\le\ep\}$.  Let $\tube_d(\ep)$ (or $\tube(\ep)$) be the closed $\ep$-neighborhood of the diagonal $\diag$ in $X\times X$, $\tube_d(\ep) = \{(x_1,x_2) : d(x_1,x_2)\le\ep\}$, and $\opentube_d(\ep)$ (or $\opentube(\ep)$)  the open $\ep$-neighborhood, $\opentube_d(\ep) = \{(x_1,x_2) : d(x_1,x_2)<\ep\}$.
  
  For $\prodset{N}\subset X\times X$, 
  we denote by $\prodset{N}^n$ the $n$-fold composition of $\prodset{N}$ with itself, $\prodset{N}\circ\prodset{N}\cdots\circ\prodset{N}$, that is, 
  \begin{align*}
  \prodset{N}^n = & \{(x,y) : \text{there exists $z_0=z,z_1,\ldots,z_n=y\in X$} \\ & \text{ such that $(z_{i-1},z_i)\in\prodset{N}$ for $i=1,\ldots,n$}\}.
  \end{align*}

\begin{defn}\label{defn:nchain}
Let $\prodset{N}$ be a neighborhood of $\diag$.  An \emph{$(\prodset{N},f)$-chain} (or simply \emph{$\prodset{N}$-chain} if the map is clear) from $x$ to $y$ is a sequence of points $(x=x_0, x_1, \dots, x_n=y)$ in $X$ such that $(f(x_{i-1}),x_i) \in \prodset{N}$ for $i=1,\ldots,n$.
\end{defn}

Thus $(x,y)\in \prodset{N}^n$ exactly when there is an $(\prodset{N},\id)$-chain of length $n$ from $x$ to $y$, where $\id$ is the identity map.

\begin{defn}\label{defn:mrelations}
We now define three relations on $X$.  We write $y>_{d'}z$ if for any $\ep>0$, there is a strong $(\ep,f,d')$-chain from $y$ to $z$.  We write $y >_\mrel z$ if $y>_{d'}z$ for some compatible metric $d'$; set $\mrel = \{(y,z)\in X \times X: y >_\mrel z\}$.
We write $y >_\wrel z$ if for any closed neighborhood $\prodset{D}$ of the diagonal in $X\times X$, there exist a closed symmetric neighborhood $\prodset{N}$ of the diagonal and an integer $n>0$ such that $\prodset{N}^{3^n} \subset \prodset{D}$ and there is an $(\prodset{N},f)$-chain of length $n$ from $y$ to $z$; set $\wrel = \{(y,z)\in X \times X: y >_\wrel z\}$.
\end{defn}

\begin{thm}\label{thm:mwequal}
The relations $\mrel$ and $\wrel$ are equal.
\end{thm}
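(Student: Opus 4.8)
The plan is to prove the two inclusions $\wrel\subseteq\mrel$ and $\mrel\subseteq\wrel$ separately. Both rest on a dictionary between metric strong $\ep$-chains and the purely topological condition defining $\wrel$, and the bridge in each direction is the Alexandroff--Urysohn metrization lemma: if $(\prodset{U}_j)_{j\ge0}$ are symmetric neighborhoods of $\diag$ with $\prodset{U}_0=X\times X$ and $\prodset{U}_{j+1}^3\subseteq\prodset{U}_j$ for all $j$, then there is a pseudometric $\rho$ with $\prodset{U}_j\subseteq\{(x,x'):\rho(x,x')\le 2^{-j}\}\subseteq\prodset{U}_{j-1}$. The cube condition $\prodset{U}_{j+1}^3\subseteq\prodset{U}_j$ is exactly what the exponent $3^n$ in the definition of $\wrel$ is built to supply, since $\prodset{N}^{3^n}=(\prodset{N}^{3^{n-1}})^3$; setting $\prodset{U}_j=\prodset{N}^{3^{n-j}}$ turns a single witness $(\prodset{N},n)$ into a finite telescope $\prodset{N}=\prodset{U}_n\subseteq\cdots\subseteq\prodset{U}_0=\prodset{N}^{3^n}\subseteq\prodset{D}$ obeying the cube condition. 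Throughout I use the standard facts that on a compact metric space every neighborhood of $\diag$ contains a closed symmetric tube $\tube_d(\eta)$, that $\tube_d(\eta)^m\subseteq\tube_d(m\eta)$, and that for any neighborhood $\prodset{W}$ of $\diag$ there is a symmetric neighborhood $\prodset{V}$ with $\prodset{V}^3\subseteq\prodset{W}$.

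For $\wrel\subseteq\mrel$, suppose $y>_\wrel z$. I would build a decreasing neighborhood basis $(\prodset{D}_k)$ of $\diag$ and, using the hypothesis at each scale, extract witnesses $(\prodset{N}_k,n_k)$ with $\prodset{N}_k^{3^{n_k}}\subseteq\prodset{D}_k$ and an $\prodset{N}_k$-chain of length $n_k$ from $y$ to $z$, choosing each $\prodset{D}_{k+1}\subseteq\prodset{N}_k$ and $\prodset{D}_{k+1}\subseteq\tube_d(2^{-k})$. Concatenating the telescopes $\prodset{N}_k^{3^{\,\cdot}}$ over $k$ gives one infinite sequence $(\prodset{U}_j)$ of closed symmetric neighborhoods with $\prodset{U}_{j+1}^3\subseteq\prodset{U}_j$, with $\prodset{U}_{p_k}=\prodset{N}_k$ at the level $p_k=\sum_{l\le k}n_l$, and with $\bigcap_j\prodset{U}_j=\diag$. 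Since $\prodset{U}_{p_k}\subseteq\tube_d(2^{-k})$ the $\prodset{U}_j$ form a neighborhood basis of $\diag$, so the metrization lemma yields a \emph{compatible} metric $d^\ast$ with $\prodset{U}_j\subseteq\{d^\ast\le 2^{-j}\}$. Then the $\prodset{N}_k$-chain becomes a strong $d^\ast$-chain from $y$ to $z$ of total length at most $n_k\,2^{-p_k}\le p_k\,2^{-p_k}\to0$, so strong $(\ep,f,d^\ast)$-chains exist for every $\ep>0$, giving $y>_{d^\ast}z$ and hence $y>_\mrel z$.

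For $\mrel\subseteq\wrel$, suppose $y>_{d'}z$ and fix a closed neighborhood $\prodset{D}$ of $\diag$; choose $\delta>0$ with $\tube_{d'}(2\delta)\subseteq\prodset{D}$ and take a strong $(\delta,f,d')$-chain $(y=x_0,\dots,x_n=z)$, writing $p_i=f(x_{i-1})$, $q_i=x_i$, $a_i=d'(p_i,q_i)$, so $\sum_i a_i\le\delta$. The naive choice of a uniform tube fails here, since the individual steps $a_i$ need not be smaller than $\delta/3^n$ and chains realizing strong recurrence are typically long with slowly shrinking steps. Instead I would use the \emph{non-uniform} neighborhood $\prodset{N}=\tube_{d'}(s)\cup\{(p_i,q_i),(q_i,p_i):i\}$, a closed symmetric neighborhood of $\diag$ for which the given chain is tautologically an $\prodset{N}$-chain of length $n$, and choose $s$ so small that $\prodset{N}^{3^n}\subseteq\prodset{D}$ nonetheless.

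The main obstacle is controlling this iterated composition of a non-uniform neighborhood. The key observation is that the finitely many points $p_i,q_i$ are separated by some $\sigma>0$, and that if $s<\min(\sigma,\delta)/3^{n+1}$ then any $\prodset{N}$-walk of length at most $3^n$ has total tube-displacement $<\sigma$, so it can never travel between two distinct jump-endpoints; consequently every such walk uses only a single jump pair and stays within $d'$-distance $2\cdot 3^n s+\max_i a_i<2\delta$ of its starting point. This confines $\prodset{N}^{3^n}$ to $\tube_{d'}(2\delta)\subseteq\prodset{D}$, which together with the $\prodset{N}$-chain of length $n$ yields $y>_\wrel z$. I expect the bookkeeping of this confinement—verifying that thinning the tube genuinely forbids ``transfers'' between distinct jumps across all $3^n$ compositions—to be the delicate step; the reverse inclusion, by contrast, is a fairly mechanical application of the metrization lemma once the telescopes are concatenated.
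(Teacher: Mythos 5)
Your two-inclusion architecture mirrors the paper's (the paper actually proves $\mrel\subset\wrel\subset\overline{\wrel}\subset\mrel$; the extra closure step is what later gives that $\mrel$, hence $\mane(f)$, is closed, but your two direct inclusions do suffice for the theorem as stated). However, your treatment of the step you yourself flag as delicate --- the containment $\prodset{N}^{3^n}\subseteq\prodset{D}$ in the direction $\mrel\subseteq\wrel$ --- contains a genuine error. You argue that since the finitely many jump endpoints $p_i,q_i$ are pairwise $\sigma$-separated and the total tube displacement of a walk of length $3^n$ is less than $\sigma$, a walk ``can never travel between two distinct jump-endpoints'' and hence ``uses only a single jump pair.'' This overlooks the fact that the points $p_i,q_i$ need not be distinct: if the chain passes through a fixed point ($x_i=f(x_i)$ gives $q_i=p_{i+1}$), or more generally if $x_i=f(x_{j-1})$ for some $j$, then a walk that lands on $q_i$ is already standing on another jump's starting point and can take that jump with no tube travel at all. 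Walks can therefore chain through several jump pairs, and can even reuse the same pair (e.g.\ cycle $p_1\to q_1=p_2\to q_2=p_1\to\cdots$ when $q_2=p_1$), so your claimed displacement bound $2\cdot 3^n s+\max_i a_i$ is false in general. The containment you want is still true for your $\prodset{N}$, but proving it requires precisely the idea your sketch is missing, which is the paper's key observation: a shortcut (loop-excision) argument. If a walk $w_0,\dots,w_m$ satisfies $w_j=w_k$ with $j<k$, splice out the segment between them; iterating, you may assume all walk points are distinct, and then each jump pair --- being a pair of specific points, each occurring at most once in the walk --- is used at most once, giving the correct bound $3^n s+\sum_i a_i\le \delta/3+\delta<2\delta$. (The paper builds the splicing in from the start by fattening each jump to a product of balls $B_i\times B_i$, so that any two visits to $B_i$ can be shortcut; with your exact pairs the excision works too, but it must be stated, since without it the step fails.)

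A smaller gap in the other direction: in $\wrel\subseteq\mrel$, requiring only $\prodset{D}_{k+1}\subseteq\prodset{N}_k$ does not give the cube condition at the junction between consecutive telescopes. There you need $\bigl(\prodset{N}_{k+1}^{3^{n_{k+1}}}\bigr)^3\subseteq\prodset{N}_k$, while your hypotheses only yield $\prodset{N}_{k+1}^{3^{n_{k+1}}}\subseteq\prodset{D}_{k+1}\subseteq\prodset{N}_k$. The fix is the tool you already quoted: choose $\prodset{D}_{k+1}$ with $\prodset{D}_{k+1}^3\subseteq\prodset{N}_k\cap\tube_d(2^{-k})$; then $\bigl(\prodset{N}_{k+1}^{3^{n_{k+1}}}\bigr)^3\subseteq\prodset{D}_{k+1}^3\subseteq\prodset{N}_k$ and the metrization lemma applies. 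With these two repairs your proof is correct, and apart from them it is essentially the paper's argument: the same non-uniform neighborhood (thin tube plus jumps along the chain) in one direction, and the same concatenated-telescope application of Kelley's lemma in the other, simplified by not having to pass through the closure $\overline{\wrel}$.
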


\begin{proof}
We will show that $\mrel \subset \wrel \subset \overline\wrel \subset \mrel$ (where $\overline\wrel$ is the closure of $\wrel$ in $X\times X$), and so they are all equal.

We first show that $\mrel \subset \wrel$.  Let $(y,z)$ be a point in $\mrel$; then there is a metric $d'$ such that for any $\ep>0$, there is a strong $(\ep,f,d')$-chain from $y$ to $z$.  Given $\prodset D$, choose $\ep$ such that $\tube_{d'}(\ep) \subset \prodset D$.  (Such an $\ep$ exists since $X\times X $ is compact.)  Let $(x_0=y,x_1,\ldots,x_n=z)$ be a strong $(\ep/2,d')$-chain from $y$ to $z$.  For $1\le i \le n$, define $\ep_i = d'(f(x_{i-1}),x_i)$, and let $B_i = B_{d'}(x_i;\ep_i/2)$ (note that $B_i$ is the single point $\{x_i\}$ if $\ep_i =0$).  Finally, define $\prodset N$ by $\prodset{N}=\tube_{d'}(\frac{\ep}{2\cdot3^n}) \bigcup (\bigcup_{i=1}^n B_i\times B_i)$.

Since $(f(x_{i-1}),x_i)$ is in $B_i\times B_i$, $(x_0=y,x_1,\ldots,x_n=z)$ is an $(\prodset{N},f)$-chain.  To see that $\prodset{N}^{3^n} \subset \prodset{D}$, let $z_0, z_1,\ldots,z_{3^n}$ be a sequence with $(z_{j-1},z_j)\in\prodset{N}$ for $1\le j\le 3^n$; we want to show that $d'(z_0,z_{3^n}) \le \ep$.  Observe that if $z_j$ and $z_k$ are both in $B_i$ for some $i$ and some $j<k$, then $z_0, z_1,\ldots,z_{j-1},z_j,z_k,z_{k+1},\ldots,z_{3^n}$ is also an $(\prodset{N},\id)$-chain from $z_0$ to $z_{3^n}$, possibly of shorter length.  Thus we may assume that for each $B_i$, the chain contains  at most one pair of points in $B_i$ and that any two such points are adjacent in the chain; two adjacent points that are not in the same $B_i$ must be within $\frac{\ep}{2\cdot3^n}$ of each other.  Therefore $d'(z_0,z_{3^n}) \le 3^n\cdot \frac{\ep}{2\cdot 3^n} + \sum \ep_i \le \frac{\ep}{2}+\frac{\ep}{2}$.

To show that $\overline\wrel \subset \mrel$, we need the following metrization lemma.

\begin{lemma}[\cite{Kelley}*{Lemma~6.12}]
Let $\{\prodset U_n\}_{n=0}^\infty$ be a sequence of symmetric subsets of $X\times X$ with $\prodset U_0 = X\times X$ and $\bigcap_{n=0}^\infty \prodset U_n = \diag$.  If for every $n\ge1$, $\prodset U_n^3 \subset \prodset U_{n-1}$, then there exists a metric $d'$ on $X$ such that $\prodset U_n \subset \opentube_{d'}(2^{-n})\subset \prodset U_{n-1}$.
\end{lemma}

(The lemma actually says that there exists a pseudo-metric, but since $X$ is metrizable, any pseudo-metric is a metric.)

Let $(y,z)$ be a point in $\overline\wrel$; we will construct a metric $d'$, depending on $(y,z)$, such that $y >_{d'}z$ (and so $y >_\mrel z$).  We construct the sequence for the metrization lemma by induction.  Let $\prodset A_0 = X\times X$.  Then assume that a closed, symmetric neighborhood of the diagonal $\prodset A_k$ has been constructed.  Let $\prodset A_k'$ be a closed, symmetric neighborhood of the diagonal such that $(\prodset A_k')^3 \subset \prodset A_k$ and  $(f\times f)(\prodset A_k') \subset \prodset A_k$ (this is possible by compactness and uniform continuity).
We can choose $\prodset A_k'$ inside $\tube_d(1/n)$ to guarantee that the $\prodset A_k$'s will shrink to $\diag$.
  Since $(y,z) \in \overline\wrel$, there exists a point $(y_k,z_k) \in \wrel$ with $(y,y_k) \in \prodset A_k'$ and $(z,z_k) \in \prodset A_k'$.  Then there exist a closed symmetric neighborhood $\prodset A_{k+1}$ of the diagonal and an integer $n_k$ such that there is an $\prodset A_{k+1}$-chain of length $n_k$ from $y_k$ to $z_k$ and $(\prodset A_{k+1})^{3^{n_k}} \subset \prodset A_k'$.  Then we can apply the metrization lemma (after renumbering) to the sequence
$$
\prodset A_0, (\prodset A_1)^{3^{n_0}}, (\prodset A_1)^{3^{n_0-1}}, \dots, (\prodset A_1)^{3^{2}}, (\prodset A_1)^{3}, \prodset A_1,(\prodset A_2)^{3^{n_1}}, (\prodset A_2)^{3^{n_1-1}}, \dots
$$
to obtain the compatible metric $d'$.  For any $\ep>0$, choose $k$ so that $\prodset A_k \subset \opentube_{d'}(\ep/3)$; then $\prodset A_{k+1} \subset \opentube_{d'}(2^{-n_k}\ep/3)$.  If we take our $\prodset A_{k+1}$-chain of length $n_k$ from $y_k$ to $z_k$, $(y_k,x_1,\dots,x_{n_k-1},z_k)$, and change the beginning and ending points to get a chain $(x_0=y,x_1,\dots,x_{n_k-1},x_{n_k}=z)$ from $y$ to $z$, we have that $\sum_{i=1}^{n_k} d'(f(x_{i-1}),x_i) \le n_k\cdot(2^{-n_k}\ep/3) + d'(f(y),f(y_k)) + d'(z, z_k) \le \ep/3 + \ep/3 + \ep/3 = \ep$.

\end{proof}

\begin{cor}\label{cor:newmane}
A point $x\in X$ is in $\mane(f)$ if and only if for any closed neighborhood $\prodset{D}$ of the diagonal in $X\times X$, there exist a closed symmetric neighborhood $\prodset{N}$ of the diagonal and an integer $n>0$ such that $\prodset{N}^{3^n} \subset \prodset{D}$ and there is an $(\prodset{N},f)$-chain of length $n$ from $x$ to itself.
\end{cor}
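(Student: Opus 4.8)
The plan is to deduce this immediately from Theorem~\ref{thm:mwequal} by recognizing both sides of the claimed equivalence as the restrictions of the relations $\mrel$ and $\wrel$ to the diagonal. The work is purely in unwinding definitions; all of the real content already sits in the theorem, which I am permitted to use.

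First I would translate membership in $\mane(f)$ into a statement about $\mrel$. By definition $\mane(f) = \bigcup_{d'} \scrwo_{d'}(f)$, so $x\in\mane(f)$ if and only if $x\in\scrwo_{d'}(f)$ for some compatible metric $d'$, i.e. $x$ is $d'$-strong chain recurrent for some $d'$. By the definition of strong chain recurrence, this says that for some $d'$ and every $\ep>0$ there is a strong $(\ep,f,d')$-chain from $x$ to itself. Comparing with Definition~\ref{defn:mrelations}, this is exactly the statement $x>_{d'}x$ for some $d'$, that is, $x>_\mrel x$, i.e. $(x,x)\in\mrel$.

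Next I would observe that the condition stated in the corollary is, word for word, the defining condition for $x>_\wrel x$ in Definition~\ref{defn:mrelations}, with both $y$ and $z$ taken to be $x$; hence it is equivalent to $(x,x)\in\wrel$.

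Finally, applying Theorem~\ref{thm:mwequal}, which asserts $\mrel=\wrel$, gives $(x,x)\in\mrel$ if and only if $(x,x)\in\wrel$, and combining the two translations yields the corollary. I do not expect any genuine obstacle here: the only points requiring attention are to note that strong chain recurrence of $x$ is precisely the reflexive case $y=z=x$ of the relation $>_{d'}$, and that the union over all metrics in the definition of $\mane(f)$ matches the existential quantifier over metrics in the definition of $\mrel$. There are no estimates or constructions to carry out beyond those already established in the proof of the theorem.
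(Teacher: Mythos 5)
Your proposal is correct and is exactly the paper's own argument: the paper's proof of the corollary is the single line ``Clearly $x\in\mane(f)$ if and only if $x >_\mrel x$,'' with the rest (identifying the stated condition as $x>_\wrel x$ and invoking Theorem~\ref{thm:mwequal}) left implicit. You have simply spelled out the same unwinding of definitions in full detail.
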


\begin{proof}
Clearly $x\in\mane(f)$ if and only if $x >_\mrel x$.
\end{proof}

In particular, $\mane(f)$ is closed, since we saw that $\mrel$ is closed.

\begin{prop} \label{prop:mfnotmfk}
In general, $\mane(f|_{\mane(f)}) \ne \mane(f)$.
\end{prop}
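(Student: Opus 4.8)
The statement asserts only that equality \emph{can} fail, so it is enough to produce a single example, and my plan is to construct one explicitly. The guiding principle is to build a compact system in which $M:=\mane(f)$ carries ``gradient\nobreakdash-like'' internal dynamics, so that $\mane(f|_M)$ is forced to be strictly smaller, while the ambient space $X$ supplies extra orbits, lying outside $M$, that make the relevant points of $M$ strong chain recurrent \emph{in $X$}.

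I would begin from a base piece on which the restricted map is directional---for instance a circle or arc $A$ carrying a ``north--south'' map $g$ with a source $N$, a sink $S$, and every other point flowing from $N$ to $S$. For such a $g$ one expects $\mane(g)=\{N,S\}$: taking the height function $u$ as a continuous Lyapunov function with $u(g(z))<u(z)$ for $z\neq N,S$, a point $p$ in the interior of $A$ can only close a chain by either bouncing back locally---paying at least the one--step cost $d'(g(p),p)>0$---or by circulating all the way around against the flow, which forces it to climb a full arc and so costs a definite positive amount; in either case $L_{d'}(p,p)>0$ for \emph{every} compatible metric $d'$, whence $p\notin\bigcup_{d'}\scrwo_{d'}(g)=\mane(g)$. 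Thus, if $A$ with the map $g$ can be realized as $f|_M$ with $M=\mane(f)$, the interior points of $A$ are exactly the witnesses to $\mane(f|_M)\subsetneq M$.

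To force those same interior points \emph{into} $\mane(f)$, I would attach to $X$ a family of ``return'' orbits running, off of $M$, from near $S$ back up to near $N$. Following an orbit is free in the strong--chain sense, so a point $p$ of $A$ can flow down to $S$, cross onto a return orbit, ride it up to $N$, and flow back down to $p$, producing a closed strong chain whose total cost is governed only by the two junction jumps. Letting the junctions approach $M$ as the scale $\ep\to 0$ drives these costs to $0$, placing $p\in\scrwo_{d}(f)\subset\mane(f)$ for a suitably chosen fixed metric $d$; here the topological characterization of $\mane(f)$ in Corollary~\ref{cor:newmane} is a convenient device for bookkeeping the chains and their neighborhoods.

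The main obstacle---and the genuinely delicate heart of the construction---is to guarantee that $\mane(f)$ is \emph{exactly} $M$ and not larger. The naive return orbits carry two opposing demands: their junctions must accumulate onto $M$ (so that the chains of $p$ become arbitrarily cheap), yet the return orbits, and in particular any limit set they accumulate to, must stay uniformly non--recurrent, i.e.\ outside $\scrwo_{d'}(f)$ for \emph{every} $d'$. If one is not careful, a limiting return orbit becomes attached to $M$, creates a genuine cycle through $N$ and $S$, and is thereby swept into $\mane(f)$, enlarging it past $M$ and collapsing the example into a (stable) situation where $\mane(f|_{\mane(f)})=\mane(f)$. Resolving this tension requires a metric--independent lower bound on the cost of \emph{re--entering} a return orbit---so that each fixed helper orbit, although usable once at a given scale, can never be closed into a cheap loop on its own---together with precise control of how the helper orbits limit onto $M$. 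Engineering an accumulation that is simultaneously ``attached enough'' to cheapen the chains from $p$ back to $p$ and ``detached enough'' to keep every helper out of $\mane(f)$ for all metrics is the subtle point on which the whole example turns, and is where I expect essentially all of the work to lie.
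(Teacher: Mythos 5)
Your overall strategy is the right kind of move, and in fact it is the paper's move: the paper proves the proposition by exhibiting a single example (Example~\ref{ex:spiralhalf}, or citing \cite{Y}), namely the disk map fixing the center and the left boundary semicircle, rotating the right boundary semicircle, with all other points spiraling clockwise outward; there the interior spirals play exactly the role of your ``return orbits'' (one enters a spiral near the bottom of the circle at radius close to $1$, rides it for free up the left side, and exits near the top, so the right semicircle lies in $\scrwo_{d}(f_4)$ for the \emph{standard} metric, while no spiral point is even chain recurrent). However, your write-up is not a proof: the decisive step---arranging that $\mane(f)$ equals $M$ exactly---is explicitly deferred (``where I expect essentially all of the work to lie''), so no example is actually produced, and verifying one is the entire content of the proposition.

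Worse, the base dynamics you insist on makes that deferred step unachievable, at least for homeomorphisms (the natural reading of your orbit-based construction, and the setting of \cite{FP}*{Thm.~3.5}). Suppose $f$ were a homeomorphism with $M=\mane(f)=A$ a circle (or arc) on which $f$ is north--south. Since $\Fix(f)\subset\gr(f)\subset M$, we would have $\Fix(f)=\{N,S\}$, and these points are not isolated in $A\subset X$, so $\Int(\Fix(f))=\emptyset$; then \cite{FP}*{Thm.~3.5} forces $\mane(f)=\Fix(f)\cup\crec(f)=\crec(f)$, i.e.\ $\crec(f)=A$. But that is impossible: every point on a closed $\ep$-chain is itself $\ep$-chain recurrent (rotate the chain), and a standard compactness argument shows that for small $\ep$ all $\ep$-chain-recurrent points lie in any prescribed neighborhood of $\crec(f)$; in a thin neighborhood of a north--south circle an angular coordinate strictly decreases along chain steps away from $N$ and $S$ once $\ep$ is small, so no closed $\ep$-chain passes through a non-fixed point $p$ of $A$. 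Hence $p\notin\crec(f)=\mane(f)$ after all---your interior points of $A$ can never be forced into $\mane(f)$ in this configuration. (This is just the restriction-to-$\crec$ theorem of Conley theory, $\crec(f|_{\crec(f)})=\crec(f)$, colliding with the fact that a north--south map is not internally chain recurrent.) The moral is that the ``fat'' fixed set inside $M$ in the paper's example and in Yokoi's is not incidental but forced: ordinary $\ep$-chains can creep along a fixed arc, so $M$ stays internally chain recurrent as Conley requires, while strong chains cannot (creeping over fixed points costs at least the distance traveled, in \emph{every} metric); and upon restriction to $M$ the fixed arc acquires nonempty relative interior, so \cite{FP}*{Thm.~3.5} strictly shrinks the Ma\~n\'e set. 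Repairing your construction means replacing the north--south base by exactly such dynamics, at which point you have reconstructed Example~\ref{ex:spiralhalf}.
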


\begin{proof}
See \cite{Y} (Example~3.1 and the examples constructed in Theorem~4.2), or the following example.
\end{proof}

\begin{ex}\label{ex:spiralhalf}
Let $X_4$ be the disk with the usual topology, and let $f_4:X_4\to X_4$ be a map that fixes the center point $(0,0)$ and the left outer semicircle $C_4$, moves points on the right outer semicircle clockwise, and moves interior points other than the center in a clockwise spiral out toward the outer circle $S_4$ (see Figure~\ref{fig:spiralhalf}).  Then $\mane(f_4) = \{(0,0)\} \cup S_4$, but $\mane(f_4|_{\mane(f_4)}) = \{(0,0)\} \cup C_4$.
\end{ex}

\begin{figure}
\psscalebox{1.0 1.0} 
{
\begin{pspicture}(0,-1.66)(3.28,1.66)
\psarc[arrowsize=3pt 2]{<-}(1.66,0.0){1.6}{-2}{90.0}
\psarc(1.66,0.0){1.6}{270.0}{370.0}
\psarc[linecolor=black, linewidth=0.06, dimen=outer](1.66,0.0){1.6}{90.0}{270.0}
\rput(1.66,0){\psplot[plotstyle=curve, plotpoints=100, xunit=1.0, yunit=1.0, polarplot=true,arrowsize=3pt 2,arrows=->]{1.0}{-724.0}{-1.6 x 20 div sin mul}}
\rput(1.66,0){\psplot[plotstyle=curve, plotpoints=100, xunit=1.0, yunit=1.0, polarplot=true]{-720}{-1890.0}{-1.6 x 20 div sin mul}}
\psdots[linecolor=black, dotsize=0.10](1.66,0)
\end{pspicture}
}
\caption{$f_4:X_4\to X_4$}
\label{fig:spiralhalf}
\end{figure}

Fathi and Pageault show (\cite{FP}*{Thm.~3.5}) that for homeomorphisms, $\mane(f)= \Fix(f) \cup \crec(f|_{X\backslash\Int(\Fix(f))})$.
Thus $\mane(f)$ depends strongly on the set of fixed points, but not on the other periodic points.  This can lead to counterintuitive results, as the following example shows.

\begin{ex} \label{ex:twocircs}
Let $f_1:X_1\to X_1$ be the homeomorphism from Example~\ref{ex:halfcirc}.  Define the space $X = X_1\times \Z_2$ and the homeomorphism $f:X\to X$ by $f(x,0) = (f_1(x), 1)$ and $f(x,1) = (f_1(x), 0)$.  Then $f$ has no fixed points, and so  we have $\mane(f) = \crec(f)= X$, which is somewhat counterintuitive since $f$ is just two copies of $f_1$ and $\mane(f_1) = C_1$, the left semicircle.  By the definition of $\mane(f)$, for every point in $X$, there must be a metric $d$ such that $x \in \screc(f)$.  One can show that if we give $X_1\times\{0\}$ the usual Euclidean metric, and  $X_1\times\{1\}$ the usual metric on the left semicircle and the metric induced by the Minkowski ?  function (\cite{Mink}) on the right semicircle, we get $\screc(f)=X$.
\end{ex}

Thus $M(f)$ occupies a middle ground between $\crec(f)$ and $\gr(f)$, and is perhaps of less dynamical interest than either, so we now turn to $\gr(f)$.

\section{The generalized recurrent set $\gr(f)$} \label{sect:GR}

Part of the usefulness of the generalized recurrent set $\gr(f)$ stems from the fact that it can be defined in terms of several different dynamical concepts.  As we have seen, Fathi and Pageault give a definition in terms of the strong chain recurrent set, and we will give one using a topological version of strong $\ep$-chains.  We begin by reviewing existing results.

Following the notation in \cite{FP}, let $\theta:X\to\R$ be a Lyapunov function for $f$ (that is, $\theta(f(x))\le \theta(x)$ for all $x$), and let $N(\theta)$ be the set of neutral points, that is, $N(\theta)=\{x\in X: \theta(f(x))=\theta(x)\}$.  Denote by $L(f)$ the set of continuous Lyapunov functions for $f$, and by $L_{d'}(f)$ the set of Lipschitz (with respect to the metric $d'$) Lyapunov functions for $f$.

\begin{prop}[\cites{AA,FP,A}] \label{prop:gre}
The following definitions for the generalized recurrent set $\gr(f)$ are equivalent.
	\begin{enumerate}
	\item (\cite{FP}) \label{item:fp} $\bigcap_{d'} \scrwo_{d'}(f)$, where the intersection is over all metrics $d'$ compatible with the topology of $X$.
	\item (\cite{FP}) $\bigcap_{d'}\bigcap_{\theta\in L_{d'}(f)} N(\theta)$, where the outer intersection is over all metrics $d'$ compatible with the topology of $X$.
	\item (\cites{A,AA}) $\bigcap_{\theta\in L_{(f)}} N(\theta)$.
	\item (\cites{A,AA}) \label{item:smallest}The set of points $x$ such that $(x, x)$ is an element of the smallest closed, transitive relation containing the graph of $f$.
	\item (\cites{A,AA}) \label{item:tf}The set of points $x$ such that $(x, x)$ is an element of $\mathcal Gf$, where $\mathcal Gf$ is as defined below.
	\end{enumerate}
\end{prop}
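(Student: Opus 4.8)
The plan is to treat the five characterizations as a single chain of equivalences, isolating the one genuinely metric-dependent step from the classical topological ones. The equivalences $(3)\Leftrightarrow(4)\Leftrightarrow(5)$ are the classical Auslander--Akin description of $\gr(f)$ (\cites{A,AA}): the neutral points common to all continuous Lyapunov functions coincide with the diagonal of the smallest closed transitive relation containing the graph of $f$, and with the corresponding points of $\mathcal{G}f$. I would invoke these directly rather than reprove them, so the real work is to connect the metric-free descriptions to the strong-chain description $(1)$, with $(2)$ serving as the bridge.

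For $(1)\Leftrightarrow(2)$ I would first establish the fixed-metric identity $\scrwo_{d'}(f)=\bigcap_{\theta\in L_{d'}(f)}N(\theta)$ and then intersect over all compatible $d'$. For a fixed metric $d'$, the key objects are the barrier functions $u_a(x)=L_{d'}(a,x)$: each is $d'$-Lipschitz (replacing the endpoint of a near-optimal chain changes its cost by at most $d'(x,x')$, by the triangle inequality) and each is a Lyapunov function (appending the zero-cost step $x\mapsto f(x)$ shows $L_{d'}(a,f(x))\le L_{d'}(a,x)$). For the inclusion $\supseteq$, note that $u_x(f(x))=L_{d'}(x,f(x))=0$ always, so neutrality of $u_x$ at $x$ forces $L_{d'}(x,x)=0$, i.e.\ $x\in\scrwo_{d'}(f)$; thus a point neutral for every $d'$-Lipschitz Lyapunov function is strong chain recurrent. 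For $\subseteq$, if $x\in\scrwo_{d'}(f)$ and $\theta$ is $K$-Lipschitz, then summing $\theta(x_{i-1})-\theta(x_i)\ge[\theta(x_{i-1})-\theta(f(x_{i-1}))]-K\,d'(f(x_{i-1}),x_i)$ along a strong $(\ep,d')$-chain from $x$ to $x$ (whose total cost is at most $\ep$) and letting $\ep\to0$ forces $\theta(f(x))=\theta(x)$, so $x\in N(\theta)$.

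For $(2)\Leftrightarrow(3)$ the containment $(3)\subseteq(2)$ is immediate, since every $d'$-Lipschitz Lyapunov function is continuous, so each $N(\theta)$ in the double intersection $(2)$ already occurs in $(3)$. The reverse uses a metric-change trick: given any continuous Lyapunov function $\theta\in L(f)$, the function $d'(x,y)=d(x,y)+|\theta(x)-\theta(y)|$ is a compatible metric for which $\theta$ is $1$-Lipschitz, so $\theta\in L_{d'}(f)$ for this $d'$; hence every $N(\theta)$ appearing in $(3)$ also appears in $(2)$, giving $(2)\subseteq(3)$ and thus equality.

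The main obstacle is the fixed-metric identity in $(1)\Leftrightarrow(2)$, which is the Fathi--Pageault content: one must confirm that the barrier functions are simultaneously Lipschitz and Lyapunov and that a single such function ($u_x$) already detects failure of strong chain recurrence. Everything else is either a soft comparison of intersections (the metric-change argument) or a direct appeal to the classical equivalences $(3)\Leftrightarrow(4)\Leftrightarrow(5)$, which I would cite.
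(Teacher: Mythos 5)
Your proposal is correct, but it relates to the paper in an unusual way: the paper offers no proof of Proposition~\ref{prop:gre} at all, treating it as a compilation of known results---items (1) and (2) are attributed to \cite{FP}, items (3)--(5) to \cites{A,AA}, and the only glue supplied in the text is the observation after Definition~\ref{defn:gf} that $\mathcal Gf$ coincides with the smallest closed transitive relation containing the graph of $f$, which identifies (4) with (5). You cite the classical equivalences $(3)\Leftrightarrow(4)\Leftrightarrow(5)$ exactly as the paper does, but you reconstruct the Fathi--Pageault content $(1)\Leftrightarrow(2)\Leftrightarrow(3)$, and the reconstruction is sound: your barrier functions $u_a$ are indeed $1$-Lipschitz Lyapunov functions; since $u_x(f(x))=0$, neutrality of $u_x$ at $x$ forces $L_{d'}(x,x)=0$, which gives one half of the fixed-metric identity $\scrwo_{d'}(f)=\bigcap_{\theta\in L_{d'}(f)}N(\theta)$; the telescoping estimate along a strong $(\ep,d')$-chain from $x$ to itself gives the other half; and the change of metric $d'(x,y)=d(x,y)+|\theta(x)-\theta(y)|$ makes any continuous Lyapunov function Lipschitz for a compatible metric, collapsing the double intersection in (2) onto the single intersection in (3). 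The paper's pure-citation approach buys economy; yours buys self-containedness, and it makes visible that the two techniques involved are exactly the ones the paper itself re-uses later: the same metric-change construction and the same Lipschitz--Lyapunov estimates (Theorem~2.6 and Lemma~2.5 of \cite{FP}) appear in the proofs of Theorem~\ref{thm:newgrdef} and Theorem~\ref{thm:grisscr}, so your argument is essentially an unpacking of the citations rather than a genuinely new path.
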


\begin{defn}[\cites{A,AA}] \label{defn:gf}
$\mathcal Gf$ is defined using transfinite recursion.  For any subset $\prodset{R}$ of $X\times X$, define its orbit $\prodset{O(R)}$ by $\prodset{O(R)}= \bigcup_{i\ge1}\prodset{R}^i$, and define $\nw(\prodset{R})$ to be $\overline{\prodset{O(R)}}$ (the closure, in $X\times X$, of $\prodset{O(R)}$).  
Let $\nw_0(f)$ be the graph of $f$, that is, $\nw_0(f)=\{(x,f(x)): x \in X\}$,
 and define inductively $\nw_{\alpha+1}(f) = \nw(\nw_\alpha(f))$ for $\alpha$ an ordinal number and $\nw_\beta(f)=\overline{\bigcup_{\alpha<\beta}\nw_\alpha(f)}$ for $\beta$ a limit ordinal.  This will stabilize at some countable ordinal $\gamma$, and we define $\mathcal Gf$ to be $\nw_\gamma(f)$.
 Note that $\mathcal Gf$ is the smallest closed, transitive relation containing the graph of $f$ referred to in Proposition~\ref{prop:gre}(\ref{item:smallest}).
\end{defn}

Again, we give a definition based on strong $\ep$-chains, but using a topological definition of chains that does not depend on the metric.

\begin{defn}
Let $\nseq = \{\prodset{N}_i\}_{i=1}^\infty$ be a sequence of neighborhoods of the diagonal $\diag$.  
A $(\nseq,f)$-chain (or simply $\nseq$-chain) is a finite sequence of points $(x=x_0, x_1, \dots, x_n=y)$ in $X$ such that $(f(x_{i-1}),x_i) \in \prodset{N}_{\sigma(i)}$ ($i=1,\ldots,n$)  for some   injection $\sigma: \{1,\ldots,n\}\to\mathbb{N}$.  (The injection $\sigma$ is the same for all $i$.)  Note that since $\sigma$ is one-to-one, each neighborhood $\prodset{N}_i$ can be used at most once in any $\nseq$-chain.
\end{defn}

\begin{thm} \label{thm:newgrdef}
A point $x\in X$ is in $\gr(f)$ if and only if for any sequence $\nseq$ of neighborhoods of the diagonal $\diag$, there exists a $(\nseq,f)$-chain from $x$ to $x$.
\end{thm}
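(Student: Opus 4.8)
The plan is to prove the equivalence by relating the $\Sigma$-chain condition to the Fathi--Pageault characterization of $\gr(f)$ as $\bigcap_{d'}\scrwo_{d'}(f)$ (Proposition~\ref{prop:gre}(\ref{item:fp})). I would split into the two implications, and the natural strategy is to pass through metrics in each direction, much as in the proof of Theorem~\ref{thm:mwequal}.

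For the easy direction, I would show that if $x\in\gr(f)$ then every $\Sigma$-chain condition is satisfied. Suppose, for contradiction, that some sequence $\Sigma=\{\prodset{N}_i\}$ admits \emph{no} $(\Sigma,f)$-chain from $x$ to $x$. The key is to manufacture a metric $d'$ for which $x\notin\scrwo_{d'}(f)$, contradicting $x\in\gr(f)\subset\scrwo_{d'}(f)$. I would build $d'$ so that the successive ``budgets'' available to a strong $(\ep,d')$-chain force each step to land inside one of the $\prodset{N}_i$'s, and the injectivity of $\sigma$ (each neighborhood used at most once) corresponds to the fact that a strong $\ep$-chain has only a finite total error $\ep$ to spend, so the $i$-th smallest step can be charged against a geometrically decreasing piece of the budget. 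Concretely, I would choose $\ep$-balls around the diagonal nested inside the $\prodset{N}_i$ and assign them weights summing to a small total, so that any strong $(\ep,d')$-chain from $x$ to $x$ would descend to a $\Sigma$-chain from $x$ to $x$, which we assumed cannot exist.

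For the converse, I would assume the $\Sigma$-chain condition holds for $x$ and show $x\in\scrwo_{d'}(f)$ for every compatible metric $d'$, hence $x\in\gr(f)$. Given $d'$ and $\ep>0$, I would build a suitable sequence $\Sigma=\{\prodset{N}_i\}$ of neighborhoods of $\diag$ tailored to $d'$ and $\ep$: take $\prodset{N}_i=\tube_{d'}(\delta_i)$ for a rapidly decreasing sequence $\delta_i$ with $\sum_i \delta_i\le\ep$. A $(\Sigma,f)$-chain from $x$ to $x$ uses each $\prodset{N}_i$ at most once, so along the chain the error terms $d'(f(x_{j-1}),x_j)$ are bounded by \emph{distinct} $\delta_i$'s; summing gives $\sum_j d'(f(x_{j-1}),x_j)\le\sum_i\delta_i\le\ep$. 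Thus the $\Sigma$-chain is automatically a strong $(\ep,d')$-chain from $x$ to $x$, witnessing $x\in\scrwo_{d'}(f)$. Since $d'$ and $\ep$ were arbitrary, $x\in\bigcap_{d'}\scrwo_{d'}(f)=\gr(f)$.

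The main obstacle is the first (forward) direction: extracting from an arbitrary sequence $\Sigma$ of neighborhoods a genuine compatible metric $d'$ that simultaneously (i) is compatible with the topology, (ii) has the balls $B_{d'}(\,\cdot\,;r)$ nested appropriately inside the prescribed $\prodset{N}_i$, and (iii) has its total-error accounting align with the injectivity constraint on $\sigma$. The cleanest route is probably to invoke the metrization lemma (Kelley, Lemma~6.12) used in Theorem~\ref{thm:mwequal}, building a decreasing sequence of symmetric neighborhoods $\prodset{U}_n$ with $\prodset{U}_n^3\subset\prodset{U}_{n-1}$ whose members are chosen inside the $\prodset{N}_i$; the delicate bookkeeping is ensuring the correspondence between ``a single use of each $\prodset{N}_i$'' and ``finite total $d'$-error,'' which is exactly the role played by the geometric factor $2^{-n}$ in that lemma. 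I would handle the matching of the two notions of ``small step'' carefully there, as it is the crux of why the one-time-use injection $\sigma$ is the correct topological surrogate for the summability condition $\sum_i d'(f(x_{i-1}),x_i)\le\ep$.
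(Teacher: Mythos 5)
Your proposal is correct in substance, and for the hard direction it takes a genuinely different route from the paper. The easy direction (the $\nseq$-chain condition implies $x\in\gr(f)$) is exactly the paper's argument: there one takes $\prodset{N}_i=\tube_{d'}(\ep/2^i)$ and uses the injectivity of $\sigma$ to bound the total error by a convergent series; this is the inclusion $\prodset C\subset\arel$ in the paper's proof, combined with Proposition~\ref{prop:gre}(\ref{item:fp}). For the hard direction, however, the paper never builds a metric from $\nseq$: it works at the level of relations, proving that $\prodset C$ (the relation defined by existence of $\nseq$-chains for every $\nseq$) is closed (Lemma~\ref{lem:closed}), observing that it is transitive and contains the graph of $f$, and concluding $\mathcal Gf\subset\prodset C$ from the Akin--Auslander characterization of $\mathcal Gf$ as the smallest closed transitive relation containing the graph (Proposition~\ref{prop:gre}(\ref{item:smallest})); the circle $\mathcal Gf\subset\prodset C\subset\arel\subset\mathcal Gf$ is then closed using Lyapunov-function results of Fathi--Pageault and Akin--Auslander. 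Your contrapositive argument (no $(\nseq,f)$-chain from $x$ to $x$ yields a compatible $d'$ with $x\notin\scrwo_{d'}(f)$) avoids $\mathcal Gf$ and Lyapunov functions entirely, and it does go through, but the bookkeeping you flag as the crux needs two specific ingredients. First, replace $\nseq$ by a nested, closed, symmetric refinement $\prodset N_1'\supset\prodset N_2'\supset\cdots$ with $\prodset N_i'\subset\prodset N_i$; any chain for the refinement is a chain for $\nseq$, so nonexistence of chains is preserved. Second, the correct budget estimate is harmonic, not geometric: from $\sum_j d'(f(x_{j-1}),x_j)\le\ep$ you can only conclude that the $k$-th largest error is at most $\ep/k$. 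This suffices: choose Kelley neighborhoods $\prodset U_m\subset\prodset N_{2^{m+1}}'\cap\tube_d(1/m)$ with $\prodset U_m^3\subset\prodset U_{m-1}$, obtaining a compatible metric $d'$ with $\opentube_{d'}(2^{-m})\subset\prodset U_{m-1}\subset\prodset N_{2^m}'$; then for $\ep=1/4$ one checks $\tube_{d'}(\ep/k)\subset\prodset N_k'$ for every $k$, so sorting the errors of any strong $(\ep,d')$-chain from $x$ to $x$ and assigning the $k$-th largest jump to $\prodset N_k'$ produces a forbidden $\nseq$-chain, the desired contradiction. As for what each approach buys: yours is self-contained, using only the Fathi--Pageault definition $\gr(f)=\bigcap_{d'}\scrwo_{d'}(f)$ and the metrization lemma already invoked in Theorem~\ref{thm:mwequal}, and, applied to pairs $(y,z)$ rather than just diagonal points, it even gives the relation-level equality $\prodset C=\arel$; the paper's proof is what additionally identifies both relations with $\mathcal Gf$, and that identification is used later in the paper (it is what makes the equivalence relation $\sim_f$ well-defined and lets the Akin--Auslander results, such as Theorem~\ref{thm:gfct}, be applied to it).
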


\begin{proof}

We prove a slightly stronger result, in terms of relations.  As in Definition~\ref{defn:mrelations}, we write $y>_{d'}z$ if for any $\ep>0$, there is a strong $(\ep,f,d')$-chain from $y$ to $z$.  We write $y >_\arel z$ if $y>_{d'}z$ for all compatible metrics $d'$, and set $\arel = \{(y,z)\in X \times X: y >_\arel z\}$.
We write $y>_\prodset{C} z$ if there is a $\Sigma$-chain from $y$ to $z$ for any sequence $\nseq$ of neighborhoods of $\diag$, and set $\prodset{C} = \{(y,z)\in X\times X: y >_\prodset{C} z\}$.  
We will show that $\mathcal Gf = \prodset C = \arel$, by proving that $\mathcal Gf \subset \prodset C \subset \arel \subset \mathcal Gf$.  We begin with the following lemma.

\begin{lemma} \label{lem:closed}
The set $\prodset{C}$ is closed in $X\times X$.
\end{lemma}

	\begin{proof}[Proof of Lemma~\ref{lem:closed}]

Let $\{(y_j,z_j)\}_{j=1}^{\infty}$ be a sequence of points in $\prodset{C}$ with $\ds\lim_{j\to\infty}(y_j,z_j)=(y,z)$; we must show that $(y,z)\in\prodset{C}$.

First, observe that if $\Sigma'$ is a subsequence of $\Sigma$, then any $\Sigma'$-chain is also a $\Sigma$-chain.  Similarly, if $\prodset{N}_i' \subset \prodset{N}_i$ for all $i$, then any $\{\prodset{N}_i' \}_{i=1}^\infty$-chain is also a $\Sigma$-chain.

Let $\nseq = \{\prodset{N}_i\}_{i=1}^\infty$ be any sequence of neighborhoods of $\diag$. For $i=1$ and $2$, choose 
 $\widetilde{\prodset{N}_i}$ to be a neighborhood of the diagonal small enough that $\widetilde{\prodset{N}_i}^2 \subset \prodset{N}_i$.  Choose a $K$ large enough that $(f(y), f(y_K))\in \widetilde{\prodset{N}_1}$ and $(z_K, z) \in \widetilde{\prodset{N}_2}$.  Define a new sequence $\nseq' = \{\prodset{N}_i \cap \widetilde{\prodset{N}_1} \cap \widetilde{\prodset{N}_2}\}_{i=3}^\infty$.  Since $(y_K,z_K)\in \prodset{C}$, there is a $\nseq'$-chain $(x_0=y_K, x_1,\ldots, x_n=z_K)$ from $y_K$ to $z_K$.  Thus $(f(y), f(y_K))\in \widetilde{\prodset{N}_1}$ and $(f(y_K),x_1)\in \widetilde{\prodset{N}_1}$, so $(f(y),x_1) \in \widetilde{\prodset{N}_1}^2 \subset \prodset{N}_1$.  Similarly, $(f(x_{n-1}), z_K) \in \widetilde{\prodset{N}_2}$ and   $(z_K, z) \in \widetilde{\prodset{N}_2}$, so $(f(x_{n-1}),z) \in \widetilde{\prodset{N}_2}^2 \subset \prodset{N}_2$.  Therefore $(y, x_1,\ldots,x_{n-1},z)$ is a $\nseq$-chain from $y$ to $z$.  Since $\nseq$ was arbitrary, we have $(y,z)\in\prodset{C}$.

	\end{proof}

The relation $\prodset C$ clearly contains the graph of $f$ and is transitive, so $\mathcal Gf \subset \prodset C$ by Proposition~\ref{prop:gre}(\ref{item:smallest}).

Next we show that $\prodset C \subset \arel$.  Take $y>_{\prodset C} z$, and let $d'$ be any compatible metric and $\ep$ any positive number.  Define the sequence $\nseq=\{\prodset{N}_i\}_{i=1}^\infty$ by 
 $\prodset{N}_i=\tube_{d'}(\ep/2^i)$.
 Then any $\nseq$-chain is a strong $(\ep,d')$-chain.  Since $\ep$ was arbitrary, we have $y>_{d'}z$; since $d'$ was arbitrary, we have $y >_\arel z$, as desired.
 
 Finally, we show that $\arel \subset \mathcal Gf$.  Let $(y,z)$ be a point in $\arel$.  We first consider $(y,z) \in \arel$ with $y\ne z$, and let $\theta$ be a continuous Lyapunov function for $f$.  Define a metric $d'$ by $d'(x_1,x_2) = d(x_1,x_2) + |\theta(x_2)-\theta(x_1)|$;
as in the proof of  \cite{FP}*{Thm.~3.1}, $\theta$ is Lipschitz with respect to $d'$.  Since $(y,z) \in \arel$, we have that $y >_{d'} z$, and so $\theta(y)\ge\theta(z)$ by \cite{FP}*{Lemma~2.5}.  Since $\theta$ was arbitrary,  we have $(y,z) \in \mathcal Gf$ by \cite{AA}*{p.~51} (note that the opposite inequality convention is used in the definition of Lyapunov function in \cite{AA}, that is, $\theta(f(x))\ge \theta(x)$).  For $y = z$, we show that if $(y,y) \not\in \mathcal Gf$, then $(y,y) \not\in \arel$.  The fact that $(y,y) \not\in \mathcal Gf$ means exactly that $y\not\in \gr(f)$, and so there exists a continuous Lyapunov function $\theta$ with $\theta(f(y)) < \theta(y)$ (\cite{AA}*{Theorem~5}).  Then $y \not\in N(\theta)$, and since $\theta$ is Lipschitz with respect to a compatible metric, we have that $y \not>_\arel y$ by \cite{FP}*{Theorem~2.6}.

\end{proof}

The next theorem, which follows from results in \cite{FP}, shows that we can obtain the generalized recurrent set as the strong chain recurrent set for a particular metric, which is much easier to work with than the intersection over all compatible metrics.

\begin{thm} \label{thm:grisscr}
There exists a metric $\dgr$ compatible with the topology such that $\gr(f) = \scrwo_{\dgr}(f)$.
\end{thm}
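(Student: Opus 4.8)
The plan is to replace the uncountable intersection $\gr(f)=\bigcap_{d'}\scrwo_{d'}(f)$ by a single metric, exploiting the fact that $\gr(f)$ is already determined by \emph{countably many} Lyapunov functions. By the characterization $\gr(f)=\bigcap_{\theta\in L(f)}N(\theta)$ from Proposition~\ref{prop:gre}, each neutral set $N(\theta)$ is closed, so
\[
X\setminus\gr(f)=\bigcup_{\theta\in L(f)}\bigl(X\setminus N(\theta)\bigr)
\]
is a union of open sets. Since $X$ is compact metric, hence second countable and Lindel\"of, this open cover of $X\setminus\gr(f)$ has a countable subcover, yielding a sequence $\{\theta_n\}_{n=1}^\infty\subset L(f)$ of continuous Lyapunov functions with $\gr(f)=\bigcap_{n=1}^\infty N(\theta_n)$.

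Next I would encode all of the $\theta_n$ into one metric. Setting $M_n=1+\sup_{x\in X}|\theta_n(x)|$, define
\[
\dgr(x,y)=d(x,y)+\sum_{n=1}^\infty \frac{1}{2^nM_n}\,\bigl|\theta_n(x)-\theta_n(y)\bigr|.
\]
Each summand is a continuous pseudometric bounded by $2^{1-n}$, so the series converges uniformly to a continuous function on $X\times X$ that vanishes on $\diag$; hence $\dgr$ is a genuine metric with $\dgr\ge d$, and it is compatible with the topology (it dominates $d$, while $\dgr(x,y)\to0$ whenever $d(x,y)\to0$). Moreover every $\theta_n$ is Lipschitz with respect to $\dgr$, since $|\theta_n(x)-\theta_n(y)|\le 2^nM_n\,\dgr(x,y)$; that is, $\theta_n\in L_{\dgr}(f)$ for all $n$.

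Finally I would assemble the two inclusions. One direction is immediate from the definition: $\gr(f)=\bigcap_{d'}\scrwo_{d'}(f)\subset\scrwo_{\dgr}(f)$. For the reverse, apply \cite{FP}*{Theorem~2.6} (equivalently, the inner intersection appearing in Proposition~\ref{prop:gre}) to the \emph{fixed} metric $\dgr$, obtaining $\scrwo_{\dgr}(f)=\bigcap_{\theta\in L_{\dgr}(f)}N(\theta)$. Since each $\theta_n$ lies in $L_{\dgr}(f)$, this gives $\scrwo_{\dgr}(f)\subset\bigcap_{n=1}^\infty N(\theta_n)=\gr(f)$. Combining the two inclusions yields $\scrwo_{\dgr}(f)=\gr(f)$.

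The main obstacle is the passage from the uncountable family (all metrics, or all Lyapunov functions) to a countable one: this is precisely where compactness, via second countability and the Lindel\"of property, is indispensable, since it is what permits a single metric to simulate all of them simultaneously. The remaining delicacy is purely quantitative, namely choosing the weights $2^{-n}M_n^{-1}$ so that the defining series converges uniformly, which is what guarantees both that $\dgr$ induces the original topology and that every $\theta_n$ remains Lipschitz with respect to it.
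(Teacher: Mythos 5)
Your proof is correct, and it reaches the conclusion by a genuinely different route than the paper. The paper's proof is a two-step quotation: it invokes \cite{FP}*{Thm.~3.1}, which supplies a \emph{single} continuous Lyapunov function $\theta$ whose neutral set is exactly $\gr(f)$, defines $\dgr(x,y)=d(x,y)+|\theta(y)-\theta(x)|$ so that $\theta$ becomes Lipschitz, and then closes with the same two ingredients you use at the end: \cite{FP}*{Thm.~2.6} for $\scrwo_{\dgr}(f)\subset N(\theta)=\gr(f)$, and part (\ref{item:fp}) of Proposition~\ref{prop:gre} for the reverse inclusion. You avoid Theorem~3.1 of \cite{FP} entirely: starting instead from the characterization $\gr(f)=\bigcap_{\theta\in L(f)}N(\theta)$ (part (3) of Proposition~\ref{prop:gre}), you use the fact that a compact metric space is hereditarily Lindel\"of to extract a countable subfamily $\{\theta_n\}$ with $\gr(f)=\bigcap_n N(\theta_n)$, and then fold all of them into one metric by the weighted sum $\dgr = d + \sum_n (2^nM_n)^{-1}|\theta_n(x)-\theta_n(y)|$, which makes every $\theta_n$ simultaneously Lipschitz. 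What the paper's route buys is brevity, at the cost of outsourcing the real work to the existence of a ``complete'' Lyapunov function for $\gr(f)$; what your route buys is self-containedness, since the Lindel\"of extraction plus the weighted-sum metric is exactly the kind of countable-summation device that underlies the proof of \cite{FP}*{Thm.~3.1} itself---in effect you have inlined that argument at the level of the metric rather than at the level of a single Lyapunov function. All the supporting details in your write-up check out: each $N(\theta)$ is closed (it is the zero set of $\theta-\theta\circ f$), the series converges uniformly by your choice of weights so $\dgr$ is a compatible metric, and the inclusion $\scrwo_{\dgr}(f)\subset N(\theta_n)$ for $\theta_n\in L_{\dgr}(f)$ is precisely the content of \cite{FP}*{Thm.~2.6} that the paper also relies on.
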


\begin{proof}
By \cite{FP}*{Thm.~3.1}, there exists a continuous Lyapunov function $\theta$ for $f$ such that $N(\theta) = \gr(f)$.  Define $\dgr$ by $\dgr(x,y) = d(x,y) + |\theta(y)-\theta(x)|$;
as in the proof of  \cite{FP}*{Thm.~3.1}, $\theta$ is Lipschitz with respect to this metric.  Then, by \cite{FP}*{Thm.~2.6}, $\scrwo_{\dgr}(f) \subset N(\theta) = \gr(f)$.  Since $\gr(f) \subset \scrwo_{\dgr}(f)$ by Proposition~\ref{prop:gre}(\ref{item:fp}), we have $\gr(f) = \scrwo_{\dgr}(f)$.
\end{proof}

\begin{prop}\label{prop:grnotrestrict}
In general, $\gr(f|_{\gr(f)})\subsetneq\gr(f)$.
\end{prop}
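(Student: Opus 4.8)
The plan is to reuse the spiral map $f_4$ of Example~\ref{ex:spiralhalf}, for which I will show that $\gr(f_4) = \{(0,0)\}\cup S_4$ while $\gr(f_4|_{\gr(f_4)}) = \{(0,0)\}\cup C_4$, so that the inclusion is strict. The engine of the computation is the characterization in Proposition~\ref{prop:gre}(3), that $\gr(f) = \bigcap_{\theta\in L(f)} N(\theta)$, combined with the elementary fact that every continuous Lyapunov function is constant on each $\omega$-limit set: the sequence $\theta(f^n(x))$ is non-increasing and (by compactness) convergent, so $\theta$ takes the single value $\lim_n \theta(f^n(x))$ at every point of $\omega(x)$.

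First I would compute $\gr(f_4)$. The center $(0,0)$ is fixed and so lies in $\gr(f_4)$. For the outer circle the key point is that every interior point other than the center spirals outward with $\omega$-limit set equal to all of $S_4$: as the orbit approaches the boundary it wraps around infinitely and therefore accumulates on the whole circle. By the observation above, every $\theta\in L(f_4)$ is constant on $S_4$, so every point of $S_4$ is neutral and $S_4\subset N(\theta)$ for all $\theta$; hence $S_4\subset\gr(f_4)$. Conversely, no interior point other than the center lies in $\gr(f_4)$: since the radius strictly increases along such orbits, $\theta(x) = -\lvert x\rvert$ is a continuous Lyapunov function (indeed $\lvert f_4(x)\rvert\ge\lvert x\rvert$ everywhere, with equality only on $\{(0,0)\}\cup S_4$) with $\theta(f_4(x)) < \theta(x)$ at each such point, so it escapes $N(\theta)$. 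Thus $\gr(f_4) = \{(0,0)\}\cup S_4$, which happens to coincide with $\mane(f_4)$.

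Next I would analyze the restriction. On $\gr(f_4) = \{(0,0)\}\cup S_4$ the map $f_4$ is the fixed center together with the half-circle homeomorphism of Example~\ref{ex:halfcirc} on $S_4$ (the arc $C_4$ fixed, the complementary arc moving clockwise into it). For that map $\screc = C_4$ for \emph{every} metric by Example~\ref{ex:halfcirc}, and since $\gr\subset\screc$ while $C_4$ consists of fixed points, we get $\gr(f_4|_{S_4}) = C_4$. Adding back the fixed center gives $\gr(f_4|_{\gr(f_4)}) = \{(0,0)\}\cup C_4$. As the moving semicircle of $S_4$ lies in $\gr(f_4)$ but not in $\gr(f_4|_{\gr(f_4)})$, the inclusion is strict.

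The step I expect to require the most care is the first one: justifying rigorously that the $\omega$-limit set of the spiraling orbits is all of $S_4$ rather than a proper subset. This is exactly what forces every continuous Lyapunov function to be constant on the entire outer circle, and hence pulls all of $S_4$—not merely the fixed arc $C_4$—into $\gr(f_4)$. It is precisely this ambient spiraling structure, absent once we restrict to $\gr(f_4)$, that accounts for the drop from $S_4$ to $C_4$ and thereby for the strict inclusion.
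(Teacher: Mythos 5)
Your route is the same as the paper's (its proof of this proposition is simply a pointer to Example~\ref{ex:spiralhalf}), and your second half is fine: once $\gr(f_4)=\{(0,0)\}\cup S_4$ is known, the restriction is a fixed point together with the map of Example~\ref{ex:halfcirc}, and $\gr\subset\screc$ for every metric plus ``fixed points are generalized recurrent'' gives $\gr(f_4|_{\gr(f_4)})=\{(0,0)\}\cup C_4$. The genuine gap is exactly at the step you flagged, and it is not merely a matter of care: the claim that interior orbits wind around infinitely often is \emph{not} a consequence of the phrase ``spirals out toward $S_4$''; it is a property that must be built into the construction of $f_4$ and then verified. Since the arc $C_4$ is pointwise fixed, continuity forces the angular steps of an interior orbit to shrink as it nears $C_4$; whether the total angular displacement diverges depends on the relative decay rates of the radial and angular increments. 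If the radial increments do not decay much faster (e.g.\ for the time-one map of the flow $\dot r=r(1-r)$, $\dot\varphi=-\mathrm{dist}\bigl((r,\varphi),\Fix\bigr)$ in polar coordinates), each orbit gains only $O(1-r)$ further angle, hence converges to a \emph{single} point of $C_4$. For such a version of $f_4$ the example actually fails: a strong $\ep$-chain (Euclidean metric) starting on the moving semicircle stays in the annulus $\{r\ge 1-\ep\}$, where flowing can contribute only $O(\ep)$ of angle beyond the free clockwise motion, so the chain cannot traverse the fixed arc $C_4$ and return; thus the open moving semicircle misses $\scrwo_{d}(f_4)\supset\gr(f_4)$, giving $\gr(f_4)=\{(0,0)\}\cup C_4$, a set of fixed points, and then $\gr(f_4|_{\gr(f_4)})=\gr(f_4)$.

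There is a second, smaller, inaccuracy: even after you arrange infinite winding (say $\dot r= r(1-r)^2$ against the same $\dot\varphi$), the $\omega$-limit set of a \emph{discrete} orbit need not be all of $S_4$. Crossing the moving semicircle the orbit takes macroscopic steps, comparable to those of the boundary map, and for suitably chosen maps its landing points can avoid a fixed middle subarc of that semicircle on every pass, so $\omega(x)$ may be only $C_4$ together with one or several boundary orbits. Fortunately your argument does not need $\omega(x)=S_4$; it needs only $\omega(x)\supset C_4$ (indeed only the two endpoints of $C_4$), which infinite winding \emph{does} give, since the steps shrink near the fixed arc and the orbit therefore passes within $o(1)$ of every point of $C_4$ on each circuit. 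Then every $\theta\in L(f_4)$ is constant, say $\equiv c$, on $\omega(x)\supset C_4$, and for $y$ in the open moving semicircle a squeeze finishes the job: $f_4^{-n}(y)$ converges to one endpoint of $C_4$ and $f_4^{n}(y)$ to the other, so monotonicity of $\theta$ along the orbit and continuity give $c=\lim_n\theta(f_4^{-n}(y))\ge\theta(y)\ge\lim_n\theta(f_4^{n}(y))=c$. Hence $\theta\equiv c$ on $S_4$, so $S_4\subset N(\theta)$ for every $\theta\in L(f_4)$, and $S_4\subset\gr(f_4)$ by Proposition~\ref{prop:gre}(3). With these two repairs --- build the infinite winding into the definition of $f_4$, and replace ``$\omega(x)=S_4$'' by ``$\omega(x)\supset C_4$ plus the squeeze'' --- your proof is complete and is the intended justification behind the paper's citation of Example~\ref{ex:spiralhalf}.
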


\begin{proof}
See Example~\ref{ex:spiralhalf}, or the examples in Theorem~4.2 of \cite{Y}.
\end{proof}

By analogy with Birkhoff's center depth (\cite{Birk}), which involves the nonwandering set, or Yokoi's $*$-depth (\cite{Y}), which involves the strong chain recurrent set, we can define the generalized recurrence depth of $f$ as follows.

\begin{defn}
Let $\gr^0(f) =X$ and $\gr^1(f)=\gr(f)$.  For any ordinal number $\alpha+1$, define $\gr^{\alpha+1}(f)=\gr(f|_{\gr^\alpha(f)})$, and for a limit ordinal $\beta$, define $\gr^\beta(f)=\bigcap_{\alpha<\beta}\gr^\alpha(f)$.  This will stabilize at some countable ordinal $\gamma$, and we define the \emph{generalized recurrence depth} of $f$ to be $\gamma$.  
\end{defn}

The following result follows immediately from work in \cite{Y}.

\begin{prop}
For any countable ordinal $\gamma$, there exists a compact metric space $X_\gamma$ and a continuous map $f_\gamma:X_\gamma\to X_\gamma$ such that the generalized recurrence depth of $f_\gamma$ is $\gamma$.
\end{prop}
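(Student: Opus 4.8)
The plan is to take Yokoi's examples realizing the strong chain recurrence ($*$-)depth and to show that, for these particular systems, the generalized recurrence filtration and the strong chain recurrence filtration coincide, so that the two notions of depth agree. First I would recall the construction: in \cite{Y}*{Thm.~4.2} Yokoi produces, for each countable ordinal $\gamma$, a compact metric space $(X_\gamma,d)$ and a continuous map $f_\gamma:X_\gamma\to X_\gamma$ whose $*$-depth with respect to $d$ is exactly $\gamma$; that is, the transfinite filtration obtained by iterating $A\mapsto\scrwo_d(f_\gamma|_A)$ strictly decreases and stabilizes precisely at stage $\gamma$. I would take $X_\gamma$ and $f_\gamma$ to be these examples. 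Since the generalized recurrence depth is computed from $\gr$, which is a topological invariant, it is insensitive to the choice of metric, so it suffices to compute it for the given $d$.

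The key step is to verify that for these examples the strong chain recurrent set agrees with the generalized recurrent set at every stage of the filtration, i.e.\ $\gr(f_\gamma|_A)=\scrwo_d(f_\gamma|_A)$ for each invariant compact set $A$ occurring in the construction. The inclusion $\gr\subset\scrwo_d$ always holds, so only $\scrwo_d\subset\gr$ needs checking, and this is where the structure of Yokoi's building blocks enters: they are assembled from pieces (in the spirit of Example~\ref{ex:halfcirc}, where $\screc$ is metric-independent, and the spiral of Example~\ref{ex:spiralhalf}) on which $d$ is an extremal metric, so that $\scrwo_d$ attains its smallest possible value $\gr$ as in Theorem~\ref{thm:grisscr}. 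Granting this claim, a straightforward transfinite induction gives $\gr^\alpha(f_\gamma)=\scrwo^\alpha_d(f_\gamma)$ for all $\alpha$: the successor step is exactly the claim applied to $A=\gr^\alpha(f_\gamma)=\scrwo^\alpha_d(f_\gamma)$, and the limit step is immediate since both filtrations take intersections at limit ordinals. Consequently the generalized recurrence depth of $f_\gamma$ equals its $*$-depth, namely $\gamma$.

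I expect the only real obstacle to be the extremality claim of the previous paragraph: one must inspect Yokoi's specific construction and confirm that the metric $d$ he uses realizes $\gr$ at every stage (equivalently, that on each piece $d$ is a Lyapunov metric of the type produced in Theorem~\ref{thm:grisscr}), rather than merely realizing the strong chain recurrent set at the top level. It is worth noting that the naive inclusion $\gr^\alpha\subset\scrwo^\alpha_d$, which follows formally since $\gr(f|_A)\subset\scrwo_d(f|_A)$ and $\scrwo_d(f|_A)\subset\scrwo_d(f|_B)$ for $A\subset B$, does \emph{not} by itself compare the stabilization ordinals of the two filtrations, so the coincidence of the filtrations really is needed. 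Everything else — the handling of successor versus limit ordinals and the bookkeeping for the restricted maps — is routine once that coincidence is in hand.
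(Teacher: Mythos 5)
Your proposal is correct and is essentially the paper's own proof: the paper likewise takes Yokoi's examples from Theorem~4.2 of \cite{Y} and observes that $\gr^\alpha(f_\gamma)=\screc^\alpha(f_\gamma)$ for all $\alpha$, so the generalized recurrence depth equals the $*$-depth $\gamma$. The stagewise coincidence of the two filtrations, which you correctly flag as the one claim requiring verification, is exactly what the paper asserts (without further detail) to be clear from inspection of those examples.
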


\begin{proof}
Yokoi defines $*$-depth as the ordinal at which the sequence $\screc^0(f)=X$, $\screc^1(f)=\screc(f|_{\screc^0(f)})=\screc(f)$, $\screc^2(f)=\screc(f|_{\screc^1(f)}),\dots$ stabilizes, and
 constructs a series of examples to prove that any countable ordinal is realizable as the $*$-depth of some map (\cite{Y}*{Thm.~4.2}).  It is clear that in the examples, $\gr^\alpha(f)=\screc^\alpha(f)$ for all $\alpha$, so these examples also give our result.
\end{proof}

We discuss maps for which the generalized recurrence depth is greater than one (that is, $\gr(f|_{\gr(f)})\subsetneq\gr(f)$) in Section~\ref{sect:relation}.

\section{Generalized recurrence for powers of $f$} \label{sect:powers}

It is well known that $\crec(f^k)=\crec(f)$ for any $k>0$ (see, for example, \cite{AHK}*{Prop.~1.1}). The corresponding statement is not true in general for $\screc(f)$, $\mane(f)$, or the nonwandering set.  (See \cite{Y}*{Ex.~3.4}, or consider Example~\ref{ex:twocircs}:  $\mane(f^2) = \screc(f^2) = C_1\times \Z_2$, while $\mane(f)$ and $\screc(f)$ both equal the entire space $X$.
 See \cites{CN,Saw} for examples for the nonwandering set.)  We now show that it is true for the generalized recurrent set:

\begin{thm}\label{thm:grpower}
For any $k\ge1$, $\gr(f^k)=\gr(f)$.
\end{thm}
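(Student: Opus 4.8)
The plan is to work entirely with the Lyapunov-function description of the generalized recurrent set, Proposition~\ref{prop:gre}(3), which says $\gr(g)=\bigcap_{\theta\in L(g)}N(\theta)$, where $L(g)$ is the set of continuous Lyapunov functions for $g$ and $N(\theta)=\{x:\theta(g(x))=\theta(x)\}$ is the neutral set. To keep the two maps straight I will write $N_f(\theta)$ and $N_{f^k}(\theta)$ for the neutral sets of $\theta$ under $f$ and under $f^k$. I would prove the two inclusions separately.

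For $\gr(f^k)\subset\gr(f)$: observe first that every Lyapunov function for $f$ is a Lyapunov function for $f^k$, since $\theta(f^k(x))\le\theta(f^{k-1}(x))\le\dots\le\theta(x)$; thus $L(f)\subset L(f^k)$. Moreover, for such a $\theta$ the same chain of inequalities shows $N_{f^k}(\theta)\subset N_f(\theta)$: if $\theta(f^k(x))=\theta(x)$ then every intermediate inequality is forced to be an equality, so in particular $\theta(f(x))=\theta(x)$. Combining these two facts, $\gr(f^k)=\bigcap_{\phi\in L(f^k)}N_{f^k}(\phi)\subset\bigcap_{\theta\in L(f)}N_{f^k}(\theta)\subset\bigcap_{\theta\in L(f)}N_f(\theta)=\gr(f)$.

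The reverse inclusion $\gr(f)\subset\gr(f^k)$ is the heart of the argument, and the key step is a summing construction that converts a Lyapunov function for $f^k$ into one for $f$. Given any $\phi\in L(f^k)$, set $\psi=\sum_{i=0}^{k-1}\phi\circ f^i$, which is continuous. A telescoping computation gives $\psi(x)-\psi(f(x))=\phi(x)-\phi(f^k(x))\ge0$, so $\psi\in L(f)$; and the same identity shows $\psi(f(x))=\psi(x)$ exactly when $\phi(f^k(x))=\phi(x)$, that is, $N_f(\psi)=N_{f^k}(\phi)$. Hence if $x\in\gr(f)$ then $x\in N_f(\psi)=N_{f^k}(\phi)$, and since $\phi\in L(f^k)$ was arbitrary, $x\in\gr(f^k)$.

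I expect the only real obstacle to be locating the right operation on Lyapunov functions for the second inclusion; once the sum $\psi=\sum_{i=0}^{k-1}\phi\circ f^i$ is written down, the verification that it is a Lyapunov function for $f$ with the matching neutral set is an immediate telescoping, and everything else reduces to Proposition~\ref{prop:gre}(3) together with elementary monotonicity of $\theta$ along $f$-orbits. (An alternative proof of the first inclusion is available from the $\mathcal Gf$ characterization, Proposition~\ref{prop:gre}(\ref{item:smallest}): since $\mathcal Gf$ is a closed transitive relation containing the graph of $f$, it contains the graph of $f^k$, which is the $k$-fold composite of the graph of $f$; hence $\mathcal G(f^k)\subset\mathcal Gf$ and so $\gr(f^k)\subset\gr(f)$. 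The Lyapunov approach, though, handles both directions uniformly.)
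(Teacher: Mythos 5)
Your proof is correct, but it takes a genuinely different route from the paper's. The paper proves the nontrivial inclusion $\gr(f)\subset\gr(f^k)$ with its new metric-free chain characterization (Theorem~\ref{thm:newgrdef}): given $x\in\gr(f)$ and a sequence $\nseq=\{\prodset{N}_i\}$ of neighborhoods of $\diag$, it chooses auxiliary neighborhoods $\prodset{N}_i'$ so small that any $(\prodset{N}_i',f)$-chain of length $k$ from $y$ to $z$ satisfies $(f^k(y),z)\in\prodset{N}_i$, takes a suitable $f$-chain from $x$ to itself (concatenating $k$ copies if needed so its length is a multiple of $k$), and then reads off every $k$-th point to produce a $(\nseq,f^k)$-chain. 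You instead work entirely with the Lyapunov characterization, Proposition~\ref{prop:gre}(3): your easy inclusion follows from $L(f)\subset L(f^k)$ together with $N_{f^k}(\theta)\subset N_f(\theta)$ for $\theta\in L(f)$, and your hard inclusion from the telescoping sum $\psi=\sum_{i=0}^{k-1}\phi\circ f^i$, which converts any $\phi\in L(f^k)$ into $\psi\in L(f)$ with $N_f(\psi)=N_{f^k}(\phi)$; the single identity $\psi(x)-\psi(f(x))=\phi(x)-\phi(f^k(x))$ does all the work, and every step checks out. The trade-off: your argument is shorter and more classical (the summing trick is a standard device), but it leans entirely on the cited equivalence from Akin--Auslander, i.e., on the theory of continuous Lyapunov functions for $\mathcal Gf$, whereas the paper's proof deliberately stays inside, and showcases, the chain-theoretic framework that is the paper's main contribution. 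Your parenthetical alternative for the easy inclusion --- the graph of $f^k$ lies in the closed transitive relation $\mathcal Gf$, so $\mathcal G(f^k)\subset\mathcal Gf$ by minimality --- is also correct, and is presumably what the paper has in mind when it says that direction ``is clear.''
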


\begin{proof}

It is clear that $\gr(f^k)\subset\gr(f)$, so we will prove the opposite inclusion.
We use Theorem~\ref{thm:newgrdef}.  Given any $x\in\gr(f)$ and any sequence $\nseq = \{\prodset{N}_i\}_{i=1}^\infty$  of neighborhoods of  $\diag$, we will find a $(\nseq,f^k)$-chain from $x$ to $x$.  Without loss of generality, we can assume that $\prodset{N}_1\supset\prodset{N}_2\supset\dots$ (if not, replace each $\prodset{N}_i$ by $\bigcap_{j\le i}\prodset{N}_j$).  Define the sets $\prodset{N}_i'$, $i\ge1$, by choosing each $\prodset{N}_i'$ small enough  that $\prodset{N}_i' \subset \prodset{N}_{i-1}'$ ($i>1$) and for any $(\prodset{N}_i',f)$-chain (Defn.~\ref{defn:nchain}) of length $k$ from a point $y$ to a point $z$, we have $(f^k(y),z)\in\prodset{N}_{i}$.  Define  new sequences $\nseq'$ and $\nseq_j'$, $0\le j <k$, by $\nseq'=\{\prodset{N}_{i}'\}_{i=1}^\infty$ and $\nseq_j' =\{\prodset{N}_{ki-j}'\}_{i=1}^\infty$, and note that any $(\nseq_0',f)$-chain is also a $(\nseq_j',f)$-chain for $0<j<k$, as well as a $(\nseq',f)$-chain.
Since $x\in\gr(f)$, there is a $(\nseq_0',f)$-chain $(x_0=x, x_1,\ldots,x_n=x)$
   from $x$ to itself, with $(f(x_{i-1}),x_i) \in \prodset{N}_{k\sigma(i)}'$ ($i=1,\ldots,n$)  for some injection $\sigma:\{1,\ldots,n\} \to\mathbb{N}$.  We may assume that the length $n$ of this chain is a multiple of $k$.  (If not, concatenate it with itself $k$ times, considering the $(j+1)$st copy $(0\le j< k)$ as a $(\nseq_j',f)$-chain; this will be a $(\nseq',f)$-chain.)
   
   For $i=0,k, 2k,\ldots$, define $m_i=\min\{k\sigma(i+1),k\sigma(i+2),\ldots,k\sigma(i+k)\}$.  Then $(x_i,x_{i+1},\ldots,x_{i+k})$ is an $(\prodset{N}_{m_i}',f)$-chain, so  $(f^k(x_i),x_{i+k})\in\prodset{N}_{m_i}$, and  $(x_0=x, x_k,\allowbreak x_{2k},\ldots,x_n=x)$ is  a $(\nseq,f^k)$-chain from $x$ to $x$.

\end{proof}

\section{Relation to ordinary chain recurrence and chain transitivity} \label{sect:relation}

In many cases the generalized recurrent set equals the chain recurrent set.  In this section we give conditions for the two sets to be equal, and discuss what it means for the dynamics if they are not equal.

Yokoi (\cite{Y}) defines a Lyapunov function $\theta$ to be \emph{pseudo-complete} if 
\begin{enumerate}
\item $\theta(f(x))=\theta(x)$ if and only if $x\in\screc(f)$, and
\item $\theta$ is constant on each $d$-strong chain transitive component.
\end{enumerate}

\begin{thm}[{\cite{Y}*{Thm.~5.3}}]
$\screc(f) = \crec(f)$ if and only if there exists a pseudo-complete Lyapunov function $\theta$ for $f$ such that the image $\theta(\screc(f))$ is totally disconnected.
\end{thm}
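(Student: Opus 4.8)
The plan is to prove the two implications separately, using throughout that $\screc(f)\subseteq\crec(f)$ always holds and that, for a pseudo-complete $\theta$, condition (1) reads $N(\theta)=\screc(f)$.

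For the forward implication, suppose $\screc(f)=\crec(f)$. Here I would invoke the complete Lyapunov function theorem for continuous maps of a compact metric space (Conley's theorem, in its discrete-time form): there is a continuous $g\in L(f)$ with $g(f(x))<g(x)$ for $x\notin\crec(f)$ and $g(f(x))=g(x)$ for $x\in\crec(f)$, which is constant on each chain transitive component and whose image $g(\crec(f))$ is a compact nowhere dense, hence totally disconnected, subset of $\R$. I claim $g$ is the desired function. Indeed $N(g)=\crec(f)=\screc(f)$, which is condition (1). Since every strong $\ep$-chain is in particular an $\ep$-chain, each strong chain transitive component lies in a single chain transitive component, on which $g$ is constant; this gives condition (2). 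Finally $g(\screc(f))=g(\crec(f))$ is totally disconnected. So this direction is routine once Conley's theorem is granted.

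For the reverse implication, suppose $\theta$ is pseudo-complete with $S:=\theta(\screc(f))$ totally disconnected; it suffices to prove $\crec(f)\subseteq N(\theta)$, since then $\crec(f)\subseteq N(\theta)=\screc(f)\subseteq\crec(f)$. The heart of the argument is an estimate controlling how much $\theta$ can rise along an ordinary $\ep$-chain. Writing $D(x)=\theta(x)-\theta(f(x))\ge 0$, the sets $\{D\le t\}$ decrease to $N(\theta)=\screc(f)$ as $t\downarrow 0$, and along any $\ep$-chain $(x_0,\dots,x_n)$ one has $\theta(x_i)-\theta(x_{i-1})=[\theta(x_i)-\theta(f(x_{i-1}))]-D(x_{i-1})$, where the first bracket is at most the modulus of continuity $\omega(\ep)$ of $\theta$. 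Thus $\theta$ can increase only where $D$ is small, i.e.\ near $\screc(f)$, where its values lie near the totally disconnected set $S$. The key claim is: for every $\eta>0$ there is $\ep_0>0$ such that every $\ep$-chain with $\ep<\ep_0$ from $a$ to $b$ satisfies $\theta(b)\le\theta(a)+\eta$. To prove it I would cover the compact totally disconnected set $S$ by finitely many pairwise disjoint open intervals of length $<\eta$, separated by gaps of some width $2\gamma>0$; then choose $t$ with $\theta(\{D\le t\})\subset V$ (the union of these intervals) and $\ep_0$ with $\omega(\ep_0)<\min(t,\gamma)$. Along any such chain, at a step where $x_{i-1}\notin\{D\le t\}$ the value $\theta$ strictly decreases, while at a step where $x_{i-1}\in\{D\le t\}$ the value lies in some interval $I_j\subset V$ and can rise by less than $\gamma$, which is not enough to cross the gap to the next interval up. Hence the chain can never raise $\theta$ across a gap of $S$, and $\theta(b)$ is bounded by the top of the interval containing (or just below) $\theta(a)$, plus $\gamma$; after minor bookkeeping this gives $\theta(b)\le\theta(a)+\eta$. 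Granting the key claim, I finish as follows: for $x\in\crec(f)$ and small $\ep$ take an $\ep$-chain $(x=x_0,x_1,\dots,x_n=x)$, so that $\theta(x_1)\le\theta(f(x))+\omega(\ep)$, while the key claim applied to the subchain from $x_1$ to $x_n=x$ gives $\theta(x)\le\theta(x_1)+\eta\le\theta(f(x))+\omega(\ep)+\eta$. Letting $\ep,\eta\to 0$ yields $\theta(x)\le\theta(f(x))$, and with the Lyapunov inequality $\theta(f(x))\le\theta(x)$ we conclude $x\in N(\theta)=\screc(f)$.

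The main obstacle is the key claim, and specifically pinning down that total disconnectedness of $S$ is exactly what prevents $\theta$ from ratcheting upward along an $\ep$-chain: increases occur only near $\screc(f)$, where $\theta$-values sit near $S$, and they are smaller than the gap widths of $S$, so the gaps cannot be traversed in the increasing direction. If $S$ contained an interval this barrier would vanish, which is why the totally disconnected hypothesis is both necessary and sufficient. The remaining ingredients — uniform continuity of $\theta$, the nested sets $\{D\le t\}$ shrinking to $\screc(f)$, and the covering of a compact totally disconnected subset of $\R$ by short disjoint intervals — are standard, and I would dispatch them briefly.
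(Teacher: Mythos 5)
Your proof is correct, but note that the paper itself never proves this statement: it is imported verbatim from Yokoi \cite{Y}*{Thm.~5.3}, so there is no in-paper argument to match against. The natural comparison is with the paper's analogous result for the generalized recurrent set, Proposition~\ref{prop:griscr}, whose proof has exactly your two-part shape but outsources both halves to citations: the ``only if'' direction to Franks' existence theorem for complete Lyapunov functions \cite{Franks} (as you do --- your one extra observation, that each strong chain transitive component sits inside a chain transitive component on which the Conley/Franks function is constant, is what upgrades that function to a pseudo-complete one), and the ``if'' direction to Corollary~1.9 of Fathi--Pageault \cite{FP}, which asserts precisely the consequence of your key claim: a Lyapunov function $\theta$ with $\theta(N(\theta))$ totally disconnected satisfies $\crec(f)\subset N(\theta)$. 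Your barrier/gap argument is a correct, self-contained, elementary proof of that fact: the cover of the compact totally disconnected set $S$ by finitely many disjoint open intervals of length $<\eta$ with gaps of positive width exists (choose points of the dense complement of $S$ spaced less than $\eta$ apart, then shrink by their positive distance to $S$), and the invariant ``once the $\theta$-value lies below a given gap it stays below it'' holds because increases can occur only at steps where the value lies inside one of the covering intervals and are then smaller than the gap width, while at all other steps $\theta$ strictly decreases; the bookkeeping you defer does close up, giving $\theta(b)\le\theta(a)+2\eta$ with your constants, which suffices. Two remarks on what each route buys. First, the citation route is shorter, but your route makes visible exactly why total disconnectedness is the right hypothesis (an interval in $S$ would destroy the barrier), and it is independent of \cite{FP}. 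Second, your ``if'' direction never invokes condition (2) of pseudo-completeness --- only $N(\theta)=\screc(f)$ and total disconnectedness of the image are used --- so you in fact prove a slightly stronger statement, consistent with Proposition~\ref{prop:griscr}, which imposes no such condition.
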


We obtain a similar statement for $\gr(f)$ using results from \cite{FP}.

\begin{prop}\label{prop:griscr}
$\gr(f) = \crec(f)$ if and only if there exists a Lyapunov function $\theta$ for $f$ such that
	\begin{enumerate}
	\item $\theta(f(x))=\theta(x)$ if and only if $x\in\gr(f)$,
	\item the image $\theta(\gr(f))$ is totally disconnected.
	\end{enumerate}
\end{prop}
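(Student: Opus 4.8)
The plan is to prove both implications by reducing to Yokoi's theorem (\cite{Y}*{Thm.~5.3}), using the metric-change trick from the proof of Theorem~\ref{thm:grisscr}: given a Lyapunov function $\theta$, one passes to a compatible metric with respect to which $\theta$ is Lipschitz. The forward implication is essentially immediate, while the real content is in the reverse implication, where the point is that this change of metric converts a plain Lyapunov function satisfying (1) and (2) into a \emph{pseudo-complete} one.

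For the forward implication, suppose $\gr(f)=\crec(f)$. Since $\gr(f)\subset\scrwo_{d'}(f)\subset\crec(f)$ for every compatible metric $d'$, the hypothesis forces $\scrwo_{d'}(f)=\crec(f)$ for all $d'$; in particular $\screc(f)=\crec(f)$ for the ambient metric $d$. Yokoi's theorem then supplies a pseudo-complete Lyapunov function $\theta$ with $\theta(\screc(f))$ totally disconnected. By the definition of pseudo-complete, $N(\theta)=\screc(f)=\gr(f)$, so (1) holds, and $\theta(\gr(f))=\theta(\screc(f))$ is totally disconnected, so (2) holds; thus $\theta$ is the desired function.

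For the reverse implication, suppose $\theta$ is a Lyapunov function satisfying (1) and (2). I would define the compatible metric $d_\theta(x,y)=d(x,y)+|\theta(y)-\theta(x)|$, with respect to which $\theta$ is $1$-Lipschitz. First I would identify the strong chain recurrent set for this metric: by \cite{FP}*{Thm.~2.6} we have $\scrwo_{d_\theta}(f)\subset N(\theta)=\gr(f)$, while $\gr(f)\subset\scrwo_{d_\theta}(f)$ by Proposition~\ref{prop:gre}(\ref{item:fp}); hence $\scrwo_{d_\theta}(f)=\gr(f)=N(\theta)$, which is condition (1) of pseudo-completeness for the metric $d_\theta$. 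The crucial observation is that $\theta$ is then automatically constant on each $d_\theta$-strong chain transitive component: if $x$ and $y$ are $d_\theta$-strong chain equivalent, then $x>_{d_\theta}y$ and $y>_{d_\theta}x$, so $\theta(x)\ge\theta(y)$ and $\theta(y)\ge\theta(x)$ by \cite{FP}*{Lemma~2.5}, using that $\theta$ is Lipschitz with respect to $d_\theta$. Thus $\theta$ is pseudo-complete for the metric $d_\theta$, and its image $\theta(\scrwo_{d_\theta}(f))=\theta(\gr(f))$ is totally disconnected by (2). Applying Yokoi's theorem to the compact metric space $(X,d_\theta)$ yields $\scrwo_{d_\theta}(f)=\crec(f)$, that is, $\gr(f)=\crec(f)$.

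The main obstacle is conceptual rather than computational: the hypotheses furnish only a plain Lyapunov function, not an a priori pseudo-complete one, and the key is to recognize that passing to $d_\theta$ makes constancy on strong chain transitive components free via \cite{FP}*{Lemma~2.5}. The points requiring care are that Yokoi's theorem is being invoked for the space $(X,d_\theta)$ rather than $(X,d)$ --- legitimate because $d_\theta$ is compatible with the topology and $\crec(f)$ is a topological invariant --- and that the totally disconnected hypothesis (2), stated for $\theta(\gr(f))$, is exactly the hypothesis needed because $\scrwo_{d_\theta}(f)=\gr(f)$.
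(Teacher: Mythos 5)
Your proof is correct, but it takes a genuinely different route from the paper's. The paper disposes of both directions with less machinery: for the ``only if'' direction it invokes Franks' complete Lyapunov function (\cite{Franks}), which is strictly decreasing off $\crec(f)$ and has $\theta(\crec(f))$ nowhere dense (hence totally disconnected), so that once $\gr(f)=\crec(f)$ this function satisfies (1) and (2); for the ``if'' direction it applies \cite{FP}*{Cor.~1.9} directly: since $N(\theta)=\gr(f)$ and $\theta(N(\theta))$ is totally disconnected, that corollary gives $\crec(f)\subset N(\theta)=\gr(f)$, and the reverse inclusion is automatic. You instead reduce both directions to Yokoi's theorem (\cite{Y}*{Thm.~5.3}): the forward direction via the sandwich $\gr(f)\subset\scrwo_{d'}(f)\subset\crec(f)$, which forces $\screc(f)=\crec(f)$, and the reverse direction via the metric $d_\theta(x,y)=d(x,y)+|\theta(y)-\theta(x)|$, under which $\scrwo_{d_\theta}(f)=\gr(f)$ (the mechanism of Theorem~\ref{thm:grisscr}) and pseudo-completeness comes for free from \cite{FP}*{Lemma~2.5}. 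Your key steps all check out: $d_\theta$ is compatible since $\theta$ is continuous and $X$ is compact, the applications of \cite{FP}*{Thm.~2.6} and Lemma~2.5 match how the paper itself uses them elsewhere, and invoking Yokoi's theorem for $(X,d_\theta)$ is legitimate because his theorem holds for any compact metric space and $\crec(f)$ is metric-independent. What your approach buys is conceptual clarity: it exhibits the proposition as the exact image of Yokoi's theorem under the passage from $(X,d)$ to $(X,d_\theta)$, making precise the paper's remark that this is ``a similar statement for $\gr(f)$.'' What the paper's approach buys is economy: its ``if'' direction is a two-line deduction from \cite{FP}*{Cor.~1.9} (essentially the same fact that underlies Yokoi's theorem), so it avoids routing through pseudo-completeness and a change of metric altogether.
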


\begin{proof}
The ``only if'' direction follows from the existence of a Lyapunov function $\theta$ for $f$ that is strictly decreasing off of $\crec(f)$ and such that $\theta(\crec(f))$ is nowhere dense (\cite{Franks}).
We prove the ``if'' direction.  By hypothesis, $N(\theta) = \gr(f)$.  So $\theta(N(\theta))$ is totally disconnected, and Corollary~1.9 of \cite{FP} implies that $\crec(f)\subset N(\theta)= \gr(f)$.  Since it is always true that $\gr(f)\subset\crec(f)$, we have $\gr(f) = \crec(f)$.

\end{proof}

The following result shows that if the upper box dimension of $\crec(f)$ is small enough, then $\screc(f)=\crec(f)$.  (See \cite{Pesin}*{\S 6} for the definition of upper box dimension, which depends on the choice of metric.)

\begin{thm}
If the upper box dimension of the space $(\crec(f),d)$ is less than one, then two points $x$ and $y$ are chain equivalent if and only if they are $d$-strong chain equivalent.  In particular, $\screc(f)=\crec(f)$.
\end{thm}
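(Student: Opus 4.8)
The plan is to prove only the nontrivial implication, since a strong $(\ep,d)$-chain is in particular an $\ep$-chain (each summand is at most the whole sum, so each step has jump $\le\ep$), and therefore $d$-strong chain equivalence trivially implies chain equivalence. Thus it suffices to show that if $x,y\in\crec(f)$ are chain equivalent, then for every $\eta>0$ there is a strong $(\eta,d)$-chain from $x$ to $y$ (and, by symmetry, from $y$ to $x$). The guiding idea is that in a space of upper box dimension less than one the product of the covering number at scale $\delta$ with $\delta$ tends to $0$, so an ordinary $\delta$-chain that never wastes steps by revisiting a region of the space already has small total jump.

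First I would reduce to chains lying in $\crec(f)$: I claim that chain-equivalent points of $\crec(f)$ can be joined, for every $\delta>0$, by a $\delta$-chain all of whose points lie in $\crec(f)$. This is essential, since the covering estimate is available only on $\crec(f)$, whose box dimension is controlled; a neighborhood of $\crec(f)$ would have the full ambient dimension. Granting this, fix $t$ with $\overline{\dim}_B(\crec(f),d)<t<1$, and let $N(\delta)$ be the least number of closed $\delta$-balls needed to cover the compact set $\crec(f)$; by the definition of upper box dimension there is $\delta_0>0$ with $N(\delta)\le\delta^{-t}$ for all $\delta\le\delta_0$, so $3N(\delta)\delta\le 3\delta^{1-t}\to0$ as $\delta\to0$.

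The heart of the argument is a greedy shortcutting of a $\delta$-chain that keeps every jump of size $O(\delta)$ while forcing the number of jumps to be at most $N(\delta)$. Fix a cover of $\crec(f)$ by $N(\delta)$ closed $\delta$-balls and assign to each point one ball of the cover containing it. Take a $\delta$-chain $(x_0=x,x_1,\dots,x_n=y)$ in $\crec(f)$. Set $j_0=0$; given $j_\ell<n$, let $B^{(\ell)}$ be the ball assigned to $x_{j_\ell+1}$ and let $j_{\ell+1}$ be the largest index $k$ with $x_k\in B^{(\ell)}$. Then $d(f(x_{j_\ell}),x_{j_{\ell+1}})\le d(f(x_{j_\ell}),x_{j_\ell+1})+d(x_{j_\ell+1},x_{j_{\ell+1}})\le\delta+2\delta=3\delta$, since $x_{j_\ell+1}$ and $x_{j_{\ell+1}}$ both lie in the radius-$\delta$ ball $B^{(\ell)}$. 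Because $j_{\ell+1}$ is the last index whose point lies in $B^{(\ell)}$, every later $x_{j_{\ell'}+1}$ has index exceeding $j_{\ell+1}$ and so cannot lie in $B^{(\ell)}$; hence the balls $B^{(0)},B^{(1)},\dots$ are distinct, and since the indices $j_\ell$ strictly increase and are bounded by $n$ the process stops after $p\le N(\delta)$ steps with $x_{j_p}=y$. The subsequence $(x_{j_0},\dots,x_{j_p})$ is thus a chain from $x$ to $y$ with at most $N(\delta)$ jumps, each of size at most $3\delta$, so $\sum_{\ell} d(f(x_{j_\ell}),x_{j_{\ell+1}})\le 3N(\delta)\delta$. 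Choosing $\delta$ small enough that $3N(\delta)\delta<\eta$ yields the required strong $(\eta,d)$-chain.

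I expect the main obstacle to be the reduction step: showing that chains between chain-equivalent points of $\crec(f)$ may be taken inside $\crec(f)$ (equivalently, that $f$ is internally chain transitive on each chain transitive component). This is a standard feature of the Conley decomposition of $\crec(f)$ into chain transitive components, and I would either cite it directly or deduce it from the idempotence $\crec(f|_{\crec(f)})=\crec(f)$ together with the coincidence of the chain transitive components of $f$ and of $f|_{\crec(f)}$; the covering count above is then elementary. Finally, taking $y=x$ shows that every chain recurrent point is strong chain recurrent, so together with the always-valid inclusion $\screc(f)\subset\crec(f)$ we conclude $\screc(f)=\crec(f)$.
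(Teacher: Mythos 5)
Your proof is correct and is essentially the paper's argument: both reduce to chains lying in the set whose box dimension is controlled (your reduction step into $\crec(f)$; the paper's passage to the chain transitive component $X_x$, which implicitly relies on the same standard internal-chain-transitivity fact from Conley theory that you flag), and both conclude that an efficient $\ep$-chain has at most $O(\ep^{-t})$ jumps with $t<1$, so its total jump length is $O(\ep^{1-t})\to 0$, giving strong chains. The only real difference is that the paper obtains the chain-length bound $t_\ep(x,y)\le C/\ep^{D}$ by citing Proposition~22 of \cite{RWcrr}, whereas your greedy ball-shortcutting argument proves that bound directly; in effect you have reconstructed the proof of the cited result.
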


Note that the theorem applies in the case that  the space $X$ itself has upper box dimension less than one.

\begin{proof}
If $x$ and $y$ are $d$-strong chain equivalent, they are \emph{a fortiori} chain equivalent, so we will prove the opposite implication.
Let $X_x \subset \crec(f)$ be the chain transitive component containing $x$ and $y$.  Let $D$ be the upper box dimension of $(X_x,d)$.  Define $t_\ep(x,y)$ to be the smallest $n$ such that there is an $\ep$-chain of length $n$ from $x$ to $y$.
It follows from Proposition~22 of \cite{RWcrr} (more precisely, from the discussion in the proof of that result) that there exists a constant $C>0$ (independent of $x$ and $y$) such that for small enough $\ep$, $t_\ep(x,y) \le C/\ep^D$.  Thus, if 
$(x=x_0, x_1, \dots, x_n=y)$ is the shortest $\ep$-chain from $x$ to $y$, then $\sum_{i=1}^n d(f(x_{i-1}),x_i)\le (C/\ep^D)\ep = C\ep^{1-D}$, which goes to zero as $\ep\to0$, and so there is a strong $\ep'$-chain from $x$ to $y$ for any $\ep'$.

\end{proof}

\begin{cor}
Let $\dgr$ be the metric from Theorem~\ref{thm:grisscr} (so  $\scrwo_{\dgr}(f) = \gr(f)$).  If the upper box dimension of the space $(\crec(f),\dgr)$ is less than one, then $\gr(f)=\crec(f)$.
\end{cor}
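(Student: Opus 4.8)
The plan is to apply the immediately preceding theorem verbatim, taking the compatible metric there to be $\dgr$. That theorem asserts that whenever the upper box dimension of $(\crec(f),d)$ is less than one for a given compatible metric $d$, one has $\scrwo_{d}(f)=\crec(f)$ (this is the ``in particular'' clause). Since $\crec(f)$ is a topological invariant and hence a fixed subset of $X$ independent of the metric, it is legitimate to measure its upper box dimension with respect to any compatible metric, in particular with respect to $\dgr$, so the hypothesis of the corollary is exactly the hypothesis of the theorem applied to $d=\dgr$.

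Specializing the theorem to $d=\dgr$ then yields $\scrwo_{\dgr}(f)=\crec(f)$. Finally I would invoke Theorem~\ref{thm:grisscr}, which furnishes the metric $\dgr$ precisely so that $\scrwo_{\dgr}(f)=\gr(f)$. Chaining the two equalities gives
\[
\gr(f)=\scrwo_{\dgr}(f)=\crec(f),
\]
as desired.

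There is essentially no obstacle here: the corollary is the conjunction of two results already in hand, and the only point requiring a word of justification is that both are being applied to the \emph{same} metric $\dgr$, which is built into the statement. All of the substantive content lives in the preceding theorem (the box-dimension bound $t_\ep(x,y)\le C/\ep^{D}$ forcing the total jump $\sum d(f(x_{i-1}),x_i)\le C\ep^{1-D}\to 0$) and in the construction of the Lyapunov metric $\dgr$ in Theorem~\ref{thm:grisscr}; the corollary itself is immediate once those are granted.
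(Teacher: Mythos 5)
Your proposal is correct and is exactly the argument the paper intends (the paper omits the proof as immediate): apply the preceding theorem with the compatible metric $d=\dgr$ to get $\scrwo_{\dgr}(f)=\crec(f)$, then combine with $\scrwo_{\dgr}(f)=\gr(f)$ from Theorem~\ref{thm:grisscr}. Your remark that the theorem, though stated for the ambient metric $d$, applies to any compatible metric is the right (and only) point needing justification.
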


We will use the following equivalence relation on $\gr(f)$ to help classify maps for which $\gr(f)\ne\crec(f)$.

\begin{defn}
Since the three relations $>_{\mathcal Gf}$, $>_\arel$, and $>_{\prodset C}$  from Theorem~\ref{thm:newgrdef} are identical, they all induce the same equivalence relation on $\gr(f)$, which we will denote by $\sim_f$.

\end{defn}

The quotient space $\gr(f)/\sim_f$ first appears, to the best of my knowledge, in \cite{A}*{Exercise~3.17}.  In \cite{FP}, the equivalence relation $\sim_\arel$ is referred to as ``Mather equivalence.''

\begin{remark}
While $\sim_f$ is an equivalence relation on $\gr(f)$, the chains in the definition(s) are not required to remain in $\gr(f)$.  As we saw in Prop.~\ref{prop:grnotrestrict}, $\gr(f|_{\gr(f)})$
is not necessarily  equal to $\gr(f)$.  And even if the two sets are equal, the  equivalence relations $\sim_f$ and $\sim_{f|_{\gr(f)}}$ may be different, as the following example shows.
.
\end{remark}

\begin{ex}
Let $X_5$ be the disk with the usual topology, and $f_5$ a map that fixes the center point $(0,0)$ and the boundary circle $S_5$ and moves other points in a spiral toward the boundary (see Figure~\ref{fig:spiralcirc}).  Then $\gr(f_5) = \{(0,0)\} \cup S_5$ and $\gr(f_5|_{\gr(f_5)}) = \gr(f_5)$.  There are two $\sim_{f_5}$ equivalence classes, $\{(0,0)\}$ and $S_5$, but each point is its own $\sim_{f_5|_{\gr(f_5)}}$ equivalence class.
\end{ex}

\begin{figure}
\psscalebox{1.0 1.0} 
{
\begin{pspicture}(0,-1.66)(3.28,1.66)
\psarc[linecolor=black, linewidth=0.06, dimen=outer](1.66,0.0){1.6}{0.0}{360.0}
\rput(1.66,0){\psplot[plotstyle=curve, plotpoints=100, xunit=1.0, yunit=1.0, polarplot=true,arrowsize=3pt 2,arrows=->]{1.0}{-724.0}{-1.6 x 20 div sin mul}}
\rput(1.66,0){\psplot[plotstyle=curve, plotpoints=100, xunit=1.0, yunit=1.0, polarplot=true]{-720}{-1890.0}{-1.6 x 20 div sin mul}}
\psdots[linecolor=black, dotsize=0.10](1.66,0)
\end{pspicture}
}
\caption{$f_5:X_5\to X_5$}
\label{fig:spiralcirc}
\end{figure}

However, we do have the following result from \cite{AA}.

\begin{thm}\label{thm:gfct}
The map $f$ restricted to a $\sim_f$ equivalence class is chain transitive.
\end{thm}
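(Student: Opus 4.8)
The plan is to translate the statement into the language of Lyapunov functions and then argue by contradiction. First I would record the one inclusion I actually need: if $\theta\in L(f)$, then $\{(y,z):\theta(y)\ge\theta(z)\}$ is a closed, transitive relation containing the graph of $f$, hence contains $\mathcal Gf$ by Proposition~\ref{prop:gre}(\ref{item:smallest}). Consequently $\theta(y)\ge\theta(z)$ whenever $y>_{\mathcal Gf}z$, and so every continuous Lyapunov function is \emph{constant} on each $\sim_f$ equivalence class $E$. I would also note that $E$ is closed (it is cut out by the closed relation $\mathcal Gf$) and invariant: for $y\in E\subset\gr(f)$ one has $(y,f(y))\in\mathcal Gf$ from the graph, while a short uniform-continuity argument in the $\arel$ formulation (shorten a strong $(\ep,d')$-chain from $y$ to $y$ by one step at the front, using $(y,y)\in\arel$ and the triangle inequality) shows $(f(y),y)\in\arel=\mathcal Gf$; hence $f(y)\sim_f y$ and $f(E)\subset E$. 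Thus $f|_E$ is a continuous self-map of the compact space $E$, and it remains to prove that it is chain transitive.

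I would prove this by contradiction. Suppose $f|_E$ is not chain transitive. Then, by the standard attractor theory for the compact system $(E,f|_E)$, there is a proper attractor $A$ (with $\emptyset\ne A\ne E$) and dual repeller $A^\ast$, and hence a continuous Lyapunov function $\psi\colon E\to\R$ for $f|_E$ with $\psi\equiv 0$ on $A$ and $\psi\equiv 1$ on $A^\ast$; in particular there are $a\in A$ and $b\in A^\ast\subset E$ with $\psi(a)\ne\psi(b)$. It is essential here that $\psi$ come from the attractor--repeller pair and be constant on the chain transitive components of $E$: an arbitrary Lyapunov function of $f|_E$ need not behave well, as the identity on a circle (for which every continuous function is a Lyapunov function, yet the map is chain transitive) already shows.

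The heart of the argument---and the step I expect to be the main obstacle---is to promote $\psi$ to a \emph{global} continuous Lyapunov function $\Theta\in L(f)$ on $X$ with $\Theta|_E=\psi$. Once this is done we are finished: $\Theta(a)\ne\Theta(b)$ for the two points $a,b\in E$ contradicts the constancy of every continuous Lyapunov function on $E$ established in the first paragraph, so $f|_E$ must have been chain transitive after all. The difficulty in the extension is continuity, not the Lyapunov inequality: a Tietze extension $\tilde\psi$ of $\psi$ to $X$ is not monotone, and the natural repair $x\mapsto\sup_{n\ge0}\tilde\psi(f^n(x))$ is a Lyapunov function that does restrict to $\psi$ on $E$ (since $\psi$ already decreases along $f|_E$-orbits) but is in general only lower semicontinuous. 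The clean way to obtain a continuous $\Theta$ is to extend the attractor rather than the function---realizing $A$ inside a global filtration for $f$ and applying the usual Conley/Franks smoothing---and this extension of a Lyapunov function from a closed invariant set to the whole space is exactly the input I would take from \cite{AA}.
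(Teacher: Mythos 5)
Your opening moves are sound: the observation that $\{(y,z):\theta(y)\ge\theta(z)\}$ is a closed transitive relation containing the graph of $f$, hence contains $\mathcal Gf$, so that every $\theta\in L(f)$ is constant on each $\sim_f$ class $E$; the verification that $E$ is closed and invariant; and the Conley-theoretic reduction producing a proper attractor--repeller pair $(A,A^\ast)$ in $(E,f|_E)$ with its function $\psi$. The genuine gap is exactly the step you flag as the heart of the argument: promoting $\psi$ (or the attractor $A$) from the subsystem $(E,f|_E)$ to the ambient system $(X,f)$. No such extension principle exists, in \cite{AA} or elsewhere, and it is false for closed invariant subsets; the paper's own example (Figure~\ref{fig:spiralcirc}) refutes it. Let $f_5$ be the disk map fixing the center and the boundary circle $S_5$ pointwise and spiraling interior points outward, and let $E_0=\{(0,0)\}\cup S_5$, a closed invariant set (it is the union of the two $\sim_{f_5}$ classes, so it is precisely the kind of set for which your lemma would have to hold). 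Then $A=\{(0,0)\}$ is clopen in $E_0$ and fixed, hence a proper attractor of $f_5|_{E_0}$ with dual repeller $A^\ast=S_5$ and Conley function $\psi=0$ at the center, $\psi=1$ on $S_5$. But $\psi$ admits no continuous Lyapunov extension $\Theta$ to the disk: the forward orbit of each interior point $x\ne(0,0)$ accumulates on $S_5$, so monotonicity along orbits forces $\Theta(x)\ge 1$ on the punctured disk, and continuity then forces $\Theta(0,0)\ge 1\ne 0$. Your fallback of ``extending the attractor rather than the function'' dies on the same example: $\{(0,0)\}$ is a \emph{repeller} of $f_5$, and any global attractor containing it must also meet $S_5$, so $A$ sits inside no global filtration separating it from $A^\ast$.

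Worse, for an honest $\sim_f$ class the object you are trying to build is provably nonexistent by your own first paragraph: every $\Theta\in L(f)$ is constant on $E$, so no nonconstant $\psi$ ever extends. Hence the implication you need (``if $f|_E$ is not chain transitive, then the attractor--repeller function extends globally'') can only hold vacuously --- that is, exactly because $\sim_f$ classes carry no proper attractors, which is the theorem being proved. Proving the extension lemma at any level of generality where its hypothesis is satisfiable (closed invariant $E$) is impossible by the counterexample above, while invoking the $\sim_f$ structure to rule the hypothesis out is circular. The citation you hoped to lean on is also misdirected: what the paper actually uses is the second part of \cite{AA}*{Lemma~12}, which asserts directly that the restriction of $f$ to an equivalence class of the symmetric part of a closed transitive relation containing the graph of $f$ is chain transitive --- it is the statement itself, not a Lyapunov-extension theorem, so \cite{AA} does not supply the missing input.
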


\begin{proof}
This follows from applying the second part of \cite{AA}*{Lemma 12} to the $\sim_f$ equivalence class.
\end{proof}

Under what circumstances is $\sim_f$ equivalence different from chain equivalence?  We have a partial answer:

\begin{prop}\label{prop:identity}
Let $f$ be chain transitive on an invariant subset $N$ of $\gr(f)$, and assume that $x \not\sim_f y$ for some pair of points $x$ and $y$ in $N$.  
Then $N/\sim_f$ is a nontrivial connected set, and the factor map $N/\sim_f \to N/\sim_f$ is the identity.
\end{prop}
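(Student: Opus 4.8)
The plan is to establish three separate facts: that every $\sim_f$-class is $f$-invariant (which makes the factor map well defined and forces it to be the identity), that $N/\sim_f$ has at least two points, and that $N/\sim_f$ is connected. Nontriviality is immediate, since by hypothesis there are $x,y\in N$ with $x\not\sim_f y$, so $q(x)\ne q(y)$ where $q\colon N\to N/\sim_f$ is the quotient map. The engine of the whole argument is the $f$-invariance of the classes: I would prove it first and then use it both to identify the factor map and to obtain the gap estimate in the connectedness argument.

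To see that the factor map is the identity, I would show $f(w)\sim_f w$ for every $w\in N$ (in fact for every $w\in\gr(f)$). Since the graph of $f$ is contained in $\mathcal Gf=\arel=\prodset C$ (Theorem~\ref{thm:newgrdef}), the inclusion $w>_\arel f(w)$ is free. For the reverse direction $f(w)>_\arel w$ I would work with the relation $\prodset C$ and use that $w>_{\prodset C}w$ because $w\in\gr(f)$. Given a sequence $\nseq=\{\prodset N_i\}$ of neighborhoods of $\diag$, take a $\nseq'$-chain $(w=x_0,x_1,\dots,x_n=w)$ from $w$ to $w$ for a sufficiently small auxiliary sequence $\nseq'$ chosen by uniform continuity of $f$; deleting the point $x_1$ yields $(f(w),x_2,\dots,x_n=w)$, and since $x_1$ is $\nseq'$-close to $f(w)$ the new initial step $(f^2(w),x_2)$ lands in $\prodset N_1$ once the auxiliary neighborhoods are shrunk. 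This produces an $\nseq$-chain from $f(w)$ to $w$, so $f(w)>_{\prodset C}w$ and thus $f(w)\sim_f w$. Hence each $\sim_f$-class is $f$-invariant, the induced map $\bar f$ on $Q:=N/\sim_f$ is well defined, and $\bar f(q(w))=q(f(w))=q(w)$, i.e.\ $\bar f=\id$.

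For connectedness, note that $\sim_f$ is a closed equivalence relation on the compact set $N$ (because $\arel=\mathcal Gf$ is closed, so $\sim_f=\arel\cap\arel^{-1}$ is closed), whence $Q$ is compact and metrizable. Suppose $Q=U\sqcup V$ with $U,V$ nonempty and clopen. Then $\tilde U=q^{-1}(U)$ and $\tilde V=q^{-1}(V)$ are disjoint, nonempty, compact, $\sim_f$-saturated sets with $N=\tilde U\sqcup\tilde V$, and by the previous paragraph they are $f$-invariant, $f(\tilde U)\subseteq\tilde U$ and $f(\tilde V)\subseteq\tilde V$. Put $\delta=d(\tilde U,\tilde V)>0$. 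Using chain transitivity of $f$ on $N$, pick an $\ep$-chain in $N$ from a point of $\tilde U$ to a point of $\tilde V$ with $\ep<\delta$, and let $k$ be the first index with $x_k\in\tilde V$; then $x_{k-1}\in\tilde U$, so $f(x_{k-1})\in\tilde U$ and $d(f(x_{k-1}),x_k)\ge d(\tilde U,\tilde V)=\delta>\ep$, contradicting the chain inequality. Therefore $Q$ is connected.

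I expect the main obstacle to be the reverse inclusion $f(w)>_\arel w$ in the invariance step: the chain surgery (deleting $x_1$) must be arranged so that the error created at the new first step is absorbed into the single neighborhood $\prodset N_1$, which requires the same uniform-continuity bookkeeping as in the proof of Lemma~\ref{lem:closed} together with an injective re-indexing of the $\prodset N_i$ so that the surviving steps still constitute an $\nseq$-chain, plus a quick treatment of the degenerate case $n=1$ (which forces $f(w)$ into arbitrarily small neighborhoods of $w$, hence $f(w)=w$). A secondary point worth making explicit is that the $\ep$-chains witnessing chain transitivity may be taken inside $N$: this is legitimate because $N$ is invariant, so $(N,f|_N)$ is a dynamical system in its own right and ``chain transitive on $N$'' is read as internal chain transitivity — and it is exactly this that makes the estimate $d(f(x_{k-1}),x_k)\ge\delta$ available.
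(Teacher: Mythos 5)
Your proof is correct and follows essentially the same route as the paper's: nontriviality is immediate, the factor map is the identity because the $\sim_f$ equivalence classes are $f$-invariant, and connectedness follows by pulling a putative separation of $N/\sim_f$ back to a separation of $N$ into forward-invariant pieces, which contradicts chain transitivity exactly as in your distance estimate. The only differences are matters of detail rather than approach: you prove the invariance of the classes (via the chain-surgery argument), which the paper simply asserts, and you make explicit the internal reading of chain transitivity on $N$, which the paper's own argument implicitly requires.
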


\begin{proof}
Let $M$ be the quotient space $N/\sim_f$, and $\pi:N\to M$ the projection.  By hypothesis, $M$ contains more than one point.  Since the $\sim_f$ equivalence classes are $f$-invariant, $f|_N$ induces the identity map on $M$.  Assume that $M$ is not connected, and let $U$, $V$ be a separation of $M$.  Then $\pi^{-1}(U)$, $\pi^{-1}(V)$ is a separation of $N$.  Since $f(\pi^{-1}(U))\subset \pi^{-1}(U)$ and $f(\pi^{-1}(V))\subset \pi^{-1}(V)$, there is no $\ep$-chain from any point in $\pi^{-1}(U)$ to any point in $\pi^{-1}(V)$ for any $\ep < d(\pi^{-1}(U), \pi^{-1}(V))$, contradicting chain transitivity.

\end{proof}

In the examples that we have seen where the chain recurrent set is strictly larger than  the generalized recurrent set, the difference was in some sense caused by the presence of a large set of fixed points (either an interval or a Cantor set).  However, the two sets can be different even if there are no fixed points, as the following example shows.

\begin{ex}
Consider the map $f=f_1\times \rho$ on the torus $S^1\times S^1$, where $f_1$ is the map from Example~\ref{ex:halfcirc} and $\rho$ is an irrational rotation.  Then $\crec(f) =S^1\times S^1$, while $\gr(f) = C_1 \times S^1$.
\end{ex}

However, the map in this example factors, by projection onto the first coordinate, onto a map with many fixed points.  This observation leads to the following characterization of maps for which the generalized recurrent set is strictly contained in the chain recurrent set.

\begin{thm}\label{thm:unctble}
If $\gr(f) \ne \crec(f)$, then $f$ factors onto a map with uncountably many fixed points.
\end{thm}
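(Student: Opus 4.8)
The plan is to combine the Lyapunov-function characterization of $\gr(f)$ with a ``symbolic'' factor built from the orbit of values of a single Lyapunov function, arranging that generalized recurrent points become fixed points downstairs.

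First I would fix a continuous Lyapunov function $\theta$ with $N(\theta)=\gr(f)$; such a $\theta$ exists by \cite{FP}*{Thm.~3.1} (it is the function used in the proof of Theorem~\ref{thm:grisscr}). Since $\gr(f)\ne\crec(f)$, the contrapositive of Proposition~\ref{prop:griscr}, applied to this $\theta$, shows that $\theta(\gr(f))$ cannot be totally disconnected: our $\theta$ already satisfies condition~(1), so it must fail condition~(2). A subset of $\R$ that is not totally disconnected has a connected component containing two distinct points $a<b$ and hence contains the interval $[a,b]$; in particular $\theta(\gr(f))$ is uncountable.

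Next I would produce the factor. Define $\Phi\colon X\to\R^{\nn}$ by $\Phi(x)=(\theta(x),\theta(f(x)),\theta(f^2(x)),\dots)$, recording the values of $\theta$ along the forward orbit of $x$. With the product topology, $\Phi$ is continuous and its image lies in the compact cube $[\min\theta,\max\theta]^{\nn}$, so $Y:=\Phi(X)$ is a compact metrizable space. Let $S$ be the one-sided shift, $S(s_0,s_1,s_2,\dots)=(s_1,s_2,\dots)$. Then $\Phi(f(x))=S(\Phi(x))$, so $S(Y)\subset Y$; setting $g:=S|_Y$ we obtain $g\circ\Phi=\Phi\circ f$ with $\Phi$ a continuous surjection onto $Y$, so $(Y,g)$ is a factor of $(X,f)$. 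The fixed points of the one-sided shift are exactly the constant sequences, so it remains to exhibit uncountably many constant sequences in $Y$. Here the key point is that $\gr(f)$ is forward invariant (equivalently, the relation $\mathcal Gf$ of Definition~\ref{defn:gf} is $(f\times f)$-invariant, so $x\in\gr(f)$ forces $f(x)\in\gr(f)$). Consequently, for $x\in\gr(f)$ we have $f^n(x)\in\gr(f)=N(\theta)$ for every $n$, whence $\theta(f^{n+1}(x))=\theta(f^n(x))$ and $\Phi(x)=(\theta(x),\theta(x),\dots)$ is a constant sequence, hence a fixed point of $g$. Thus $\Phi(\gr(f))\subset\Fix(g)$, and since the zeroth coordinate of $\Phi$ recovers $\theta$, the set $\Phi(\gr(f))$ surjects onto the uncountable set $\theta(\gr(f))$; therefore $\Fix(g)$ is uncountable.

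The conceptual heart of the argument is the passage from a Lyapunov function to the shift factor, together with the observation that generalized recurrent points have \emph{constant} value-orbits; the remaining verifications (continuity of $\Phi$, compactness and metrizability of $Y$, and the intertwining $g\circ\Phi=\Phi\circ f$) are routine. The one step that must not be glossed over is the forward invariance of $\gr(f)$, which is exactly what upgrades the single equality $\theta(f(x))=\theta(x)$ valid on $N(\theta)$ to constancy of $\theta$ along the entire forward orbit, and hence is the only place where more than bookkeeping is required.
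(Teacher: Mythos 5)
Your proof is correct, and its first half is exactly the paper's: both take the Lyapunov function $\theta$ of \cite{FP}*{Thm.~3.1} with $N(\theta)=\gr(f)$ and use Proposition~\ref{prop:griscr} to force $\theta(\gr(f))$ to contain an interval. The factor construction, however, is genuinely different. The paper stays inside the Mather structure: by \cite{FP}*{Prop.~3.2}, $\theta$ is constant on each $\sim_f$ class, so $\gr(f)/\sim_f$ is uncountable, and the factor is the quotient $X/\!\sim$ with $\sim\;=\;\sim_f\cup\,\diag$, whose induced map has fixed point set exactly $\gr(f)/\!\sim$. You instead semiconjugate $f$ to the shift on $Y=\Phi(X)\subset\R^{\nn}$ via the orbit-value map $\Phi$, and use forward invariance of $\gr(f)$ to send every generalized recurrent point to a constant sequence. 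Your route is more elementary: it avoids \cite{FP}*{Prop.~3.2} entirely, as well as the point-set checks implicit in the quotient construction (one needs $\sim$ to be a closed relation for $X/\!\sim$ to be compact metrizable). The price is the invariance fact, which you assert with only a parenthetical justification; it does deserve a proof, and the quickest one uses the paper's own Proposition~\ref{prop:gre}(3): for any $\theta'\in L(f)$ the function $\theta'\circ f$ is again a continuous Lyapunov function, and $x\in N(\theta'\circ f)$ if and only if $f(x)\in N(\theta')$, so $\gr(f)=\bigcap_{\theta'\in L(f)}N(\theta')$ gives $f(\gr(f))\subset\gr(f)$. (Alternatively, $\mathcal Gf\cap(f\times f)^{-1}(\mathcal Gf)$ is closed, transitive, and contains the graph of $f$, so by minimality it equals $\mathcal Gf$, which is the $(f\times f)$-invariance you invoke.) One trade-off to note: the paper's argument additionally establishes that the Mather quotient $\gr(f)/\sim_f$ itself is uncountable, which is precisely what the Cantor--Bendixson corollary following the theorem uses; your shift factor does not yield that statement, so that corollary would still need the paper's quotient argument.
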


\begin{proof}
Theorem~3.1 of \cite{FP} tells us that there is a Lyapunov function $\theta:X\to\R$ for $f$ such that $\theta(f(x))=\theta(x)$ if and only if $x\in\gr(f)$, so, by Proposition~\ref{prop:griscr}, we must have that the image $\theta(\gr(f))$ contains an interval.  Proposition~3.2 of \cite{FP} says that $\theta$ is constant on each $\sim_f$ equivalence class, so $\theta$ induces a map $\bar\theta$ on the  quotient $\gr(f)/\sim_f$.  Since the image $\bar\theta (\gr(f)/\sim_f) =\theta(\gr(f))$ contains an interval, we must have that $\gr(f)/\sim_f$ is uncountable.  If, as in \cite{AA}, we extend the  equivalence relation $\sim_f$ from $\gr(f)$ to an equivalence relation $\sim$ on all of $X$ by setting $\sim\ =\ \sim_f \cup \diag$ (that is, $x\sim y$ if $x=y$ or $x\in\gr(f)$, $y\in\gr(f)$, and $x\sim_f y$), then $f$ factors onto the map $\bar f: X/\sim \to X/\sim$, with fixed points $\gr(f)/\sim_f=\gr(f)/\sim$.

\end{proof}

\begin{cor}
If $\gr(f) \ne \crec(f)$, then either
	\begin{enumerate}
	\item $\gr(f)/\sim_f$ contains a nontrivial connected set, or
	\item $\gr(f)/\sim_f$ is homeomorphic to the disjoint union of a Cantor set and a countable set.
	\end{enumerate}

\end{cor}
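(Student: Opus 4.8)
The plan is to read the structure of the quotient space $Q := \gr(f)/\!\sim_f$ off of the proof of Theorem~\ref{thm:unctble}, and then apply a standard dichotomy for compact metrizable spaces. First I would record two facts about $Q$. Since the relation $\mathcal Gf = \arel = \prodset{C}$ of Theorem~\ref{thm:newgrdef} is closed in $X\times X$ (Lemma~\ref{lem:closed}), the induced equivalence relation $\sim_f$ is closed in $\gr(f)\times\gr(f)$; as $\gr(f)$ is a closed, hence compact, subset of $X$, the quotient $Q$ is a compact metrizable space. Second, the proof of Theorem~\ref{thm:unctble} shows that, under the hypothesis $\gr(f)\ne\crec(f)$, the induced map $\bar\theta$ on $Q$ has image $\bar\theta(Q)=\theta(\gr(f))$ containing a nondegenerate interval, so $Q$ is uncountable.

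With $Q$ compact, metrizable, and uncountable, I would split into two cases according to connectedness. If $Q$ has a connected component containing more than one point, that component is a nontrivial connected subset of $Q$, and we are in case (1). (Equivalently, one can produce such a set dynamically: if $f$ fails to separate two classes inside a single chain-transitive piece of $\gr(f)$, Proposition~\ref{prop:identity} already exhibits a nontrivial connected subset of $Q$.) Otherwise every component of $Q$ is a singleton, i.e.\ $Q$ is totally disconnected. Then I would invoke the Cantor--Bendixson decomposition $Q = P\cup S$, where $P$ is the perfect kernel and $S=Q\setminus P$ is the scattered part; since $Q$ is an uncountable compact metric space, $S$ is countable and $P$ is nonempty. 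As $P$ is nonempty, perfect, compact, metrizable, and (being a subspace of the totally disconnected $Q$) totally disconnected, Brouwer's characterization identifies $P$ as a Cantor set. This produces the Cantor set and the countable set of case (2).

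The main obstacle is to upgrade the set-theoretic splitting $Q = P \cup S$ to a genuine topological \emph{disjoint union}, that is, to show that the perfect kernel $P$ is clopen in $Q$ (equivalently, that no scattered class accumulates onto the Cantor part). This is precisely the point at which the abstract Cantor--Bendixson decomposition is not by itself enough: there exist compact totally disconnected metric spaces whose perfect kernel fails to be open. To close the gap I would try to exploit the dynamical description of the classes. Each $\sim_f$ class is chain transitive (Theorem~\ref{thm:gfct}) and hence lies in a single ordinary chain-transitive component of $\crec(f)$; in the totally disconnected case Proposition~\ref{prop:identity} forces each such component to meet $\gr(f)$ in exactly one class, so $Q$ is naturally the space of chain-transitive components that meet $\gr(f)$. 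I would then argue that distinct chain-transitive components are chain-separated and that this separation descends to clopen sets in $Q$, yielding the clopen-ness of $P$. Verifying that the chain-component partition of $Q$ is by clopen sets --- chain-transitive components are always closed but need not \emph{a priori} be open --- is the crux of the argument.
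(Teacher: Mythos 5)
Your first two paragraphs are precisely the paper's proof: uncountability of $Q=\gr(f)/\sim_f$ is extracted from the proof of Theorem~\ref{thm:unctble}, and the totally disconnected case is handled by the Cantor--Bendixson theorem together with Brouwer's characterization of the Cantor set (your preliminary observation that $Q$ is compact and metrizable, because $\sim_f$ is a closed relation on the compact set $\gr(f)$, is correct and is used implicitly by the paper). At that point you are done: what the paper's proof establishes, and all it can establish, is the Cantor--Bendixson \emph{partition} statement --- $Q$ is the union of two disjoint subsets, one of which (the perfect kernel $P$, in its subspace topology) is a Cantor set and the other of which is countable. The phrase ``homeomorphic to the disjoint union'' is loose wording for exactly this; no clopenness of the two pieces, i.e.\ no topological sum decomposition, is intended.

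The ``main obstacle'' of your third paragraph is therefore not a gap in a proof of the corollary as intended, and, more importantly, the strengthening you are trying to prove there is false, so no dynamical argument will close your ``crux.'' A counterexample is a variant of Example~\ref{ex:cantor}: let $K$ be a Cantor set in the circle, let $M$ be the (countable) set of midpoints of the complementary arcs, and let $f$ fix $K\cup M$ and move all other points clockwise, each half-gap sliding toward its clockwise endpoint. Working with a compatible metric $d'$ in which every nonempty portion of $K$ has positive arc length, the same computation as in Example~\ref{ex:cantor} gives $\scrwo_{d'}(f)=K\cup M$, hence $\gr(f)=K\cup M$, while $\crec(f)$ is the whole circle, so the hypothesis $\gr(f)\ne\crec(f)$ holds. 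Moreover no two distinct points of $K\cup M$ are $\sim_f$-equivalent: in at least one of the two directions, a strong chain would have to either cross a piece of $K$ of positive measure or make net counterclockwise progress against the flow, and either requirement forces a total jump cost bounded away from zero. Thus $Q\cong K\cup M$, which is compact, totally disconnected and uncountable, but its perfect kernel $K$ is \emph{not} open in $Q$, since the isolated points of $M$ accumulate on every point of $K$; consequently $Q$ is not homeomorphic to any topological sum of a Cantor set and a countable space, even though it is the disjoint union of the Cantor set $K$ and the countable set $M$ in the partition sense. So your instinct that the perfect kernel can fail to be clopen is exactly right; the conclusion to draw is that the corollary must be read in the partition sense, under which your proof is complete at the end of your second paragraph.
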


\begin{proof}
The Cantor-Bendixson theorem (\cite{Sierp}*{Thm.~47}) says that $\gr(f)/\sim_f$ can be written as the disjoint union of a perfect set $P$ and a countable set.  Since $\gr(f)/\sim_f$ is uncountable, the set $P$ must be nonempty.  If $\gr(f)/\sim_f$ does not contain a nontrivial connected set, then it is totally disconnected, and so $P$ is a nonempty, totally disconnected, compact, perfect set, that is, a Cantor set.
\end{proof}

\begin{cor}
If the generalized recurrence depth of $f$ is greater than one (that is, if $\gr(f|_{\gr(f)})\subsetneq\gr(f)$), then $f$ factors onto a map with uncountably many fixed points.
\end{cor}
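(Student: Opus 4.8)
The plan is to apply Theorem~\ref{thm:unctble} to the restriction $g = f|_{\gr(f)}$ and then to promote the factor it produces from a factor of $g$ to a factor of $f$. Note first that $\gr(f)$ is closed, hence a compact metric space, so $g$ is a continuous self-map of a compact metric space and the machinery of the earlier sections applies to it verbatim.

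To invoke Theorem~\ref{thm:unctble} for $g$ I must verify its hypothesis, namely $\gr(g) \ne \crec(g)$. By assumption the generalized recurrence depth exceeds one, that is, $\gr(g) = \gr(f|_{\gr(f)}) \subsetneq \gr(f)$, so it suffices to show that $\crec(g) = \gr(f)$. The inclusion $\crec(g) \subset \gr(f)$ is immediate, since $\crec(g)$ is by definition a subset of the domain of $g$. For the reverse inclusion I would use Theorem~\ref{thm:gfct}: each $x \in \gr(f)$ lies in a $\sim_f$-equivalence class $[x] \subset \gr(f)$ on which $f$ is chain transitive, so for every $\ep>0$ there is an $\ep$-chain from $x$ to itself lying entirely in $[x] \subset \gr(f)$. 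Such a chain is an $\ep$-chain for $g$, whence $x \in \crec(g)$. Thus $\crec(g) = \gr(f) \ne \gr(g)$.

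Running the argument of Theorem~\ref{thm:unctble} with $g$ in place of $f$ then shows that $\gr(g)/\sim_g$ is uncountable: Theorem~3.1 of \cite{FP} supplies a Lyapunov function $\theta$ for $g$ with $N(\theta) = \gr(g)$, and since $\gr(g) \ne \crec(g)$, Proposition~\ref{prop:griscr} (applied to $g$) forces the image $\theta(\gr(g))$ to contain an interval. As $\theta$ is constant on each $\sim_g$-equivalence class, the function it induces on $\gr(g)/\sim_g$ has image containing an interval, so the quotient $\gr(g)/\sim_g$ must be uncountable.

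Finally, to obtain a factor of $f$ itself I would extend $\sim_g$ from $\gr(g)$ to all of $X$ by setting $\sim\ =\ \sim_g \cup \diag$, exactly as in the proof of Theorem~\ref{thm:unctble}. The $\sim_g$-equivalence classes are $g$-invariant, and since $g$ agrees with $f$ on $\gr(f) \supset \gr(g)$ they are $f$-invariant; hence $f$ descends to a continuous factor map $\bar f$ on $X/\sim$ that fixes the image of each $\sim_g$-class, and these images constitute the uncountable set $\gr(g)/\sim_g$. Therefore $\bar f$ has uncountably many fixed points. The one point requiring care, and the main obstacle, is precisely this last step: Theorem~\ref{thm:unctble} literally yields a factor of the \emph{restriction} $g$, which is not a priori a factor of $f$, so I must carry out the collapse on all of $X$ and check that the $\sim_g$-classes, though defined by dynamics internal to $\gr(f)$, are genuine $f$-invariant subsets of $X$.
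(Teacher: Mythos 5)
Your proposal is correct and follows essentially the same route as the paper: the paper likewise uses Theorem~\ref{thm:gfct} to get $\crec(f|_{\gr(f)})=\gr(f)$, re-runs the reasoning of Theorem~\ref{thm:unctble} on the restriction $f|_{\gr(f)}$, and then extends $\sim_{f|_{\gr(f)}}$ to all of $X$ by adjoining $\diag$ so that the resulting quotient is a factor of $f$ itself with the uncountable fixed-point set $\gr(f|_{\gr(f)})/\sim$. Your write-up simply makes explicit the details the paper leaves implicit (both inclusions for $\crec(f|_{\gr(f)})=\gr(f)$ and the $f$-invariance of the equivalence classes).
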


\begin{proof}
It follows from Theorem~\ref{thm:gfct} that
 $\crec(f|_{\gr(f)})= \gr(f)$.  So we can apply the reasoning in Theorem~\ref{thm:unctble} to the map $f|_{\gr(f)}:\gr(f)\to\gr(f)$.  We extend the  equivalence relation $\sim_{f|_{\gr(f)}}$ from $\gr(f|_{\gr(f)})$ to all of $X$ by setting $\sim = \sim_{f|_{\gr(f)}} \cup \diag$; the induced map on $X/\sim$ will have the uncountable set $\gr(f|_{\gr(f)})/\sim$ as the fixed point set.

\end{proof}

\begin{bibdiv}
\begin{biblist}

\bib{ABC}{article}{
      author={Abbondandolo, Alberto},
      author={Bernardi, Olga},
      author={Cardin, Franco},
       title={Chain recurrence, chain transitivity, {L}yapunov functions and
  rigidity of {L}agrangian submanifolds of optical hypersurfaces},
        date={2015},
     journal={preprint},
}

\bib{A}{book}{
      author={Akin, Ethan},
       title={The general topology of dynamical systems},
      series={Graduate Studies in Mathematics},
   publisher={American Mathematical Society, Providence, RI},
        date={1993},
      volume={1},
        ISBN={0-8218-3800-8},
      review={\MR{1219737 (94f:58041)}},
}

\bib{AA}{article}{
      author={Akin, Ethan},
      author={Auslander, Joseph},
       title={Generalized recurrence, compactifications, and the {L}yapunov
  topology},
        date={2010},
        ISSN={0039-3223},
     journal={Studia Math.},
      volume={201},
      number={1},
       pages={49\ndash 63},
         url={http://dx.doi.org/10.4064/sm201-1-4},
      review={\MR{2733274 (2012a:37013)}},
}

\bib{AHK}{article}{
      author={Akin, Ethan},
      author={Hurley, Mike},
      author={Kennedy, Judy~A.},
       title={Dynamics of topologically generic homeomorphisms},
        date={2003},
        ISSN={0065-9266},
     journal={Mem. Amer. Math. Soc.},
      volume={164},
      number={783},
       pages={viii+130},
         url={http://dx.doi.org/10.1090/memo/0783},
      review={\MR{1980335 (2004j:37024)}},
}

\bib{Auslander}{article}{
      author={Auslander, Joseph},
       title={Generalized recurrence in dynamical systems},
        date={1964},
     journal={Contributions to Differential Equations},
      volume={3},
       pages={65\ndash 74},
      review={\MR{0162238 (28 \#5437)}},
}

\bib{AusNC}{incollection}{
      author={Auslander, Joseph},
       title={Non-compact dynamical systems},
        date={1973},
   booktitle={Recent advances in topological dynamics ({P}roc. {C}onf., {Y}ale
  {U}niv., {N}ew {H}aven, {C}onn., 1972; in honor of {G}ustav {A}rnold
  {H}edlund)},
   publisher={Springer, Berlin},
       pages={6\ndash 11. Lecture Notes in Math., Vol. 318},
      review={\MR{0394613 (52 \#15414)}},
}

\bib{Birk}{book}{
      author={Birkhoff, George~D.},
       title={Dynamical systems},
      series={With an addendum by Jurgen Moser. American Mathematical Society
  Colloquium Publications, Vol. IX},
   publisher={American Mathematical Society, Providence, R.I.},
        date={1966},
      review={\MR{0209095 (35 \#1)}},
}

\bib{CN}{article}{
      author={Coven, Ethan~M.},
      author={Nitecki, Zbigniew},
       title={Nonwandering sets of the powers of maps of the interval},
        date={1981},
        ISSN={0143-3857},
     journal={Ergodic Theory Dynamical Systems},
      volume={1},
      number={1},
       pages={9\ndash 31},
      review={\MR{627784 (82m:58043)}},
}

\bib{E}{incollection}{
      author={Easton, Robert},
       title={Chain transitivity and the domain of influence of an invariant
  set},
        date={1978},
   booktitle={The structure of attractors in dynamical systems ({P}roc.
  {C}onf., {N}orth {D}akota {S}tate {U}niv., {F}argo, {N}.{D}., 1977)},
      series={Lecture Notes in Math.},
      volume={668},
   publisher={Springer, Berlin},
       pages={95\ndash 102},
      review={\MR{518550 (80j:58051)}},
}

\bib{FP}{article}{
      author={Fathi, Albert},
      author={Pageault, Pierre},
       title={Aubry-{M}ather theory for homeomorphisms},
        date={2015},
        ISSN={0143-3857},
     journal={Ergodic Theory Dynam. Systems},
      volume={35},
      number={4},
       pages={1187\ndash 1207},
         url={http://dx.doi.org/10.1017/etds.2013.107},
      review={\MR{3345168}},
}

\bib{Franks}{incollection}{
      author={Franks, John},
       title={A variation on the {P}oincar\'e-{B}irkhoff theorem},
        date={1988},
   booktitle={Hamiltonian dynamical systems ({B}oulder, {CO}, 1987)},
      series={Contemp. Math.},
      volume={81},
   publisher={Amer. Math. Soc., Providence, RI},
       pages={111\ndash 117},
         url={http://dx.doi.org/10.1090/conm/081/986260},
      review={\MR{986260 (90e:58095)}},
}

\bib{Garay}{article}{
      author={Garay, B.~M.},
       title={Auslander recurrence and metrization via {L}iapunov functions},
        date={1985},
        ISSN={0532-8721},
     journal={Funkcial. Ekvac.},
      volume={28},
      number={3},
       pages={299\ndash 308},
         url={http://www.math.kobe-u.ac.jp/~fe/xml/mr0852116.xml},
      review={\MR{852116 (87g:54087)}},
}

\bib{Kelley}{book}{
      author={Kelley, John~L.},
       title={General topology},
   publisher={D. Van Nostrand Company, Inc., Toronto-New York-London},
        date={1955},
      review={\MR{0070144 (16,1136c)}},
}

\bib{KK}{article}{
      author={Kotus, Janina},
      author={Klok, Fopke},
       title={A sufficient condition for {$\Omega$}-stability of vector fields
  on open manifolds},
        date={1988},
        ISSN={0010-437X},
     journal={Compositio Math.},
      volume={65},
      number={2},
       pages={171\ndash 176},
         url={http://www.numdam.org/item?id=CM_1988__65_2_171_0},
      review={\MR{932642 (89m:58114)}},
}

\bib{Nit}{article}{
      author={Nitecki, Zbigniew},
       title={Explosions in completely unstable flows. {I}. {P}reventing
  explosions},
        date={1978},
        ISSN={0002-9947},
     journal={Trans. Amer. Math. Soc.},
      volume={245},
       pages={43\ndash 61},
         url={http://dx.doi.org/10.2307/1998856},
      review={\MR{511399 (81e:58030)}},
}

\bib{Nit2}{article}{
      author={Nitecki, Zbigniew},
       title={Recurrent structure of completely unstable flows on surfaces of
  finite {E}uler characteristic},
        date={1981},
        ISSN={0002-9327},
     journal={Amer. J. Math.},
      volume={103},
      number={1},
       pages={143\ndash 180},
         url={http://dx.doi.org/10.2307/2374191},
      review={\MR{601464 (82d:58059)}},
}

\bib{Mink}{article}{
      author={Parad{\'{\i}}s, J.},
      author={Viader, P.},
      author={Bibiloni, L.},
       title={The derivative of {M}inkowski's {$?(x)$} function},
        date={2001},
        ISSN={0022-247X},
     journal={J. Math. Anal. Appl.},
      volume={253},
      number={1},
       pages={107\ndash 125},
         url={http://dx.doi.org/10.1006/jmaa.2000.7064},
      review={\MR{1804596 (2002c:11092)}},
}

\bib{Peix2}{article}{
      author={Peixoto, Maria L{\'u}cia~Alvarenga},
       title={Characterizing {$\Omega$}-stability for flows in the plane},
        date={1988},
        ISSN={0002-9939},
     journal={Proc. Amer. Math. Soc.},
      volume={104},
      number={3},
       pages={981\ndash 984},
         url={http://dx.doi.org/10.2307/2046825},
      review={\MR{964882 (89j:58061)}},
}

\bib{Peix1}{article}{
      author={Peixoto, Maria L{\'u}cia~Alvarenga},
       title={The closing lemma for generalized recurrence in the plane},
        date={1988},
        ISSN={0002-9947},
     journal={Trans. Amer. Math. Soc.},
      volume={308},
      number={1},
       pages={143\ndash 158},
         url={http://dx.doi.org/10.2307/2000955},
      review={\MR{946436 (90b:58137)}},
}

\bib{Pesin}{book}{
      author={Pesin, Yakov~B.},
       title={Dimension theory in dynamical systems},
      series={Chicago Lectures in Mathematics},
   publisher={University of Chicago Press, Chicago, IL},
        date={1997},
        ISBN={0-226-66221-7; 0-226-66222-5},
         url={http://dx.doi.org/10.7208/chicago/9780226662237.001.0001},
        note={Contemporary views and applications},
      review={\MR{1489237 (99b:58003)}},
}

\bib{RWcrr}{article}{
      author={Richeson, David},
      author={Wiseman, Jim},
       title={Chain recurrence rates and topological entropy},
        date={2008},
        ISSN={0166-8641},
     journal={Topology Appl.},
      volume={156},
      number={2},
       pages={251\ndash 261},
         url={http://dx.doi.org/10.1016/j.topol.2008.07.005},
      review={\MR{2475112 (2010c:37027)}},
}

\bib{Saw}{article}{
      author={Sawada, Ken},
       title={On the iterations of diffeomorphisms without {$C^{0}-\Omega
  $}-explosions: an example},
        date={1980},
        ISSN={0002-9939},
     journal={Proc. Amer. Math. Soc.},
      volume={79},
      number={1},
       pages={110\ndash 112},
         url={http://dx.doi.org/10.2307/2042398},
      review={\MR{560595 (81h:58055)}},
}

\bib{Sierp}{book}{
      author={Sierpinski, Waclaw},
       title={General topology},
      series={Mathematical Expositions, No. 7},
   publisher={University of Toronto Press, Toronto},
        date={1952},
        note={Translated by C. Cecilia Krieger},
      review={\MR{0050870 (14,394f)}},
}

\bib{ST}{article}{
      author={Souza, Josiney~A.},
      author={Tozatti, H{\'e}lio V.~M.},
       title={Prolongational limit sets of control systems},
        date={2013},
        ISSN={0022-0396},
     journal={J. Differential Equations},
      volume={254},
      number={5},
       pages={2183\ndash 2195},
         url={http://dx.doi.org/10.1016/j.jde.2012.11.020},
      review={\MR{3007108}},
}

\bib{ST2}{article}{
      author={Souza, Josiney~A.},
      author={Tozatti, H{\'e}lio V.~M.},
       title={Some aspects of stability for semigroup actions and control
  systems},
        date={2014},
        ISSN={1040-7294},
     journal={J. Dynam. Differential Equations},
      volume={26},
      number={3},
       pages={631\ndash 654},
         url={http://dx.doi.org/10.1007/s10884-014-9379-9},
      review={\MR{3274435}},
}

\bib{Y}{article}{
      author={Yokoi, Katsuya},
       title={On strong chain recurrence for maps},
        date={2015},
     journal={Annales Polonici Mathematici},
      volume={114},
       pages={165\ndash 177},
}

\bib{Zheng1}{article}{
      author={Zheng, Zuo-Huan},
       title={Chain transitivity and {L}ipschitz ergodicity},
        date={1998},
        ISSN={0362-546X},
     journal={Nonlinear Anal.},
      volume={34},
      number={5},
       pages={733\ndash 744},
         url={http://dx.doi.org/10.1016/S0362-546X(97)00581-6},
      review={\MR{1634815 (99i:54056)}},
}
\end{biblist}
\end{bibdiv}

\end{document}